\newtheorem{theorem}{Theorem}[section]
\newtheorem{lemma}{Lemma}
\newtheorem{proposition}{Proposition}[section]
\title{Serrin-type Overdetermined problems in $\mathbb H^n$ }
\author{Zhenghuan Gao}\author{Xiaohan Jia}\author{Jin Yan}
\address{Department of Mathematics, Shanghai University, Shanghai, 200444, China.}\email{gzh@shu.edu.cn}
\address{School of Mathematical Sciences, Xiamen University, Xiamen, 361005, China.}\email{jiaxiaohan@xmu.edu.cn}
\address{School of Mathematical Sciences, University of Science and Technology of China, Hefei, 230025,  China.}\email{yjoracle@mail.ustc.edu.cn}
\begin{document}

\begin{abstract}
In this paper, we prove the symmetry of the solution to overdetermined problem for the equation $\sigma_k(D^2u-uI)=C_n^k$ in hyperbolic space. Our approach is based on establishing a Rellich-Pohozaev type identity and using a P function. Our result generalizes the overdetermined problem for Hessian equation in Euclidean space.
\end{abstract}

\keywords{overdetermined problems, hyperbolic space, P functions, Rellich-Pohozaev identity.}

\maketitle

\section{Introduction}
In the seminal paper \cite{Serrin1971} Serrin established the symmetry of the solution to
\begin{equation}\label{laplace}
    \Delta u=n
\end{equation}
in a bounded $C^2$ domain $\Omega\subset\mathbb R^n$ with 
\begin{equation}\label{bdrycd}
u=0 \quad\text{and}\quad u_\gamma=1\quad\text{on }\ \partial\Omega,
\end{equation} 
where $\gamma$ is the unit outer normal to $\partial\Omega$. If $u\in C^2(\overline\Omega)$ is a solution to \eqref{laplace} and \eqref{bdrycd}, then $u=\frac{|x|^2-1}2$ upto a translation and $\Omega$ is the unitary ball. The proof is based on the method of \textit{moving planes} and it can be applied to more general uniformly elliptic equations. In \cite{Weinberger1971} Weinberger provided an alternative proof by using maximum principle for P function and a Rellich-Pohozaev type identity.

There have been many generalizations of Serrin and Weinberger's work to  quasilinear elliptic equations (see e.g. \cite{FK2008,FGK2006,Garofalo1989} and reference therein ) and fully nonlinear equations  such as Hessian equation and Weingarten curvature equation (see e.g. \cite{Salani2008,Jia2020,Bao2014}). 
In Euclidean space, the overdetermined boundary problem for $\sigma_k(D^2u)=C_n^k$ was studied in  \cite{Salani2008} by using a Rellich-Pohozaev type identity and some geometric inequalities and was also dealt in \cite{Bao2014} by using method of moving planes. Using the P function $P=|Du|^2-2u$ as mentioned in \cite{Ma1999, Weinberger1971} we can give an alternative proof which is parallel to Weinberger's.
\begin{theorem}[\cite{Salani2008}]\label{2thm}
Suppose $\Omega\subset \mathbb R^n$ is a $C^2$ bounded domain and $u\in C^3(\Omega)\cap C^2(\overline\Omega)$ is a solution to the following problem
\begin{equation}\label{Eucpb}
    \begin{cases}
    \sigma_k(D^2u)=C_n^k&\quad\text{in }\Omega,\\
    u=0&\quad\text{on }\partial\Omega,\\
    u_\gamma=c_0&\quad\text{on }\partial\Omega,
    \end{cases}
\end{equation}
with $k\in\{1,\cdots,n\}$ and $c_0$ a positive constant. Then upto a translation, $u=\frac{|x|^2-1}2$ and $\Omega$ is a ball of radius $c_0$.
\end{theorem}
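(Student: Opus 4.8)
The plan is to run Weinberger's argument in the fully nonlinear setting. First I would verify that $u$ is $k$-admissible: since $u=0$ and $u_\gamma=c_0>0$ on $\partial\Omega$, $u$ is negative in a one-sided neighbourhood of $\partial\Omega$, so it attains a negative minimum at an interior point $x_*$, where $D^2u(x_*)\ge 0$, hence $D^2u(x_*)\in\overline{\Gamma_k}$; as $\sigma_k(D^2u(x_*))=C_n^k>0$ this forces $D^2u(x_*)\in\Gamma_k$. Because $\Omega$ is connected, $x\mapsto D^2u(x)$ is continuous with image in $\{\sigma_k=C_n^k\}\subset\{\sigma_k>0\}$, and $\Gamma_k$ is a connected component of $\{\sigma_k>0\}$, so the whole image lies in $\Gamma_k$. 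Consequently the Newton tensor $S^{ij}:=\partial\sigma_k/\partial u_{ij}$ is positive definite on $\overline\Omega$, the linearized operator $L:=S^{ij}\partial_i\partial_j$ is (uniformly) elliptic, and $\partial_iS^{ij}=0$ since $u\in C^3(\Omega)$. The remaining ingredients are the $P$-function bound, a Rellich--Pohozaev identity, and a matching of the two.

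For the $P$-function $P=|Du|^2-2u$: differentiating the equation gives $S^{ij}\partial_\ell u_{ij}=0$, and together with the identities $S^{ij}u_{ij}=k\sigma_k$ and $\sum_\ell S^{ij}u_{\ell i}u_{\ell j}=\sigma_1\sigma_k-(k+1)\sigma_{k+1}$ a direct computation yields
\[ LP=2\bigl[(\sigma_1-k)\sigma_k-(k+1)\sigma_{k+1}\bigr]. \]
On $\Gamma_k$ the Maclaurin inequalities give $\sigma_1\ge n(\sigma_k/C_n^k)^{1/k}=n$ and $\sigma_{k+1}\le C_n^{k+1}(\sigma_k/C_n^k)^{(k+1)/k}=C_n^{k+1}$, and since $(n-k)C_n^k=(k+1)C_n^{k+1}$ we obtain $LP\ge 0$, with equality at a point forcing $D^2u=I$ there (equality case of Maclaurin's inequality). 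By the maximum principle $\max_{\overline\Omega}P$ is attained on $\partial\Omega$, where $u=0$ and $Du=c_0\gamma$ give $P=c_0^2$; hence $P\le c_0^2$ in $\Omega$. If the maximum is attained at an interior point, the strong maximum principle forces $P\equiv c_0^2$, hence $D^2u\equiv I$, and we finish as below; otherwise Hopf's lemma yields $P_\gamma>0$ on $\partial\Omega$, and since $Du=c_0\gamma$ there one computes $P_\gamma=2c_0(u_{\gamma\gamma}-1)$, so $u_{\gamma\gamma}>1$ on $\partial\Omega$.

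Multiplying the equation by $\langle x,Du\rangle$, integrating over $\Omega$, and integrating by parts using $\partial_iS^{ij}=0$ and the standard recursion among the Newton tensors, all interior remainders cancel because $u=0$ on $\partial\Omega$, leaving a Rellich--Pohozaev identity of the shape
\[ -(n+2)\,C_n^k\!\int_\Omega u\,dx=\frac{c_0^{2}}{k}\!\int_{\partial\Omega}\langle x,\gamma\rangle\,S^{ij}\gamma_i\gamma_j\,d\sigma. \]
On $\partial\Omega$, in a frame with $e_n=\gamma$, the Hessian splits into the normal entry $u_{\gamma\gamma}$ and a tangential block equal to $c_0$ times the second fundamental form, the mixed entries vanishing because $Du$ is normal; writing $\kappa=(\kappa_1,\dots,\kappa_{n-1})$ for the principal curvatures one then has $S^{ij}\gamma_i\gamma_j=c_0^{k-1}\sigma_{k-1}(\kappa)$ and, on $\partial\Omega$, $u_{\gamma\gamma}\,c_0^{k-1}\sigma_{k-1}(\kappa)+c_0^k\sigma_k(\kappa)=C_n^k$. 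Feeding these into the identity above, into $\int_\Omega\sigma_k\,dx=C_n^k|\Omega|=\tfrac{c_0^k}{k}\int_{\partial\Omega}\sigma_{k-1}(\kappa)\,d\sigma$, and into $\int_\Omega LP\,dx=\int_{\partial\Omega}P_\gamma\,S^{ij}\gamma_i\gamma_j\,d\sigma=2c_0^k\int_{\partial\Omega}(u_{\gamma\gamma}-1)\,\sigma_{k-1}(\kappa)\,d\sigma\ge 0$, and invoking the Hsiung--Minkowski formulas and the Newton--Maclaurin inequalities for the boundary curvatures, one shows that every inequality used must in fact be an equality; in particular $u_{\gamma\gamma}\equiv 1$ on $\partial\Omega$, i.e.\ $P_\gamma\equiv 0$, contradicting $P_\gamma>0$ unless $P\equiv c_0^2$ in $\Omega$. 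Then $LP\equiv 0$, so $D^2u\equiv I$; completing the square and using $u=0$ and $|Du|=c_0$ on $\partial\Omega$ gives $u=\tfrac{1}{2}\bigl(|x-x_0|^2-c_0^2\bigr)$ for some $x_0$, and $\Omega=B_{c_0}(x_0)$.

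I expect the last step to be the crux. For $k=1$ it degenerates to Weinberger's one-line identity $\int_\Omega(c_0^2-P)\,dx=0$ and nothing geometric is needed; but for $k\ge 2$ one must genuinely reconcile the Rellich--Pohozaev identity, the pointwise bound $P\le c_0^2$ (equivalently $u_{\gamma\gamma}\ge 1$ on $\partial\Omega$), and Minkowski/Alexandrov--Fenchel-type inequalities for $\partial\Omega$, keeping careful track of which integral controls which and of the admissibility of the boundary curvatures required by those inequalities, so that all the equality cases are triggered simultaneously.
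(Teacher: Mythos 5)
Your preliminary steps are sound. The interior-minimum argument for $k$-admissibility (positive semidefiniteness of $D^2u$ at the negative interior minimum, plus the fact that $\Gamma_k$ is the connected component of $\{\sigma_k>0\}$ containing $I$) is a valid alternative to the boundary-point argument the paper imports from \cite{Salani2008}, and your computation of $LP$ and the resulting bound $P\le c_0^2$ coincide with Lemma \ref{P}. Your Rellich--Pohozaev identity is also correct as stated: it follows from the classical identity with multiplier $x\cdot Du$ after the separate integration by parts $\int_\Omega\sigma_k^{ij}u_iu_j\,dx=-kC_n^k\int_\Omega u\,dx$, so ``all interior remainders cancel'' is not literally what happens, but the end formula checks out. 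The genuine gap is the concluding step, which you yourself flag as unresolved. With your choice of multiplier everything has been pushed onto the boundary: you must now compare $\int_{\partial\Omega}(x\cdot\gamma)\sigma_{k-1}(\kappa)\,d\sigma$, $\int_{\partial\Omega}\sigma_{k-1}(\kappa)\,d\sigma$, $|\partial\Omega|$ and $|\Omega|$ by means of Alexandrov--Fenchel/Minkowski-type \emph{inequalities} for a hypersurface that is only known to be $(k-1)$-convex, and then trigger all their equality cases simultaneously. Identifying which inequality chain closes, and justifying it under this weak convexity, is precisely the hard part of the original argument in \cite{Salani2008}; it is not supplied here, so the proof as written does not reach the conclusion.

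The paper sidesteps this entirely by choosing a different multiplier. Multiplying the equation by $u$ and writing $u_{ij}=u_{il}\bigl(\tfrac{|x|^2}{2}\bigr)_{lj}$ yields (Lemma \ref{dlempohorn}) an identity whose interior term is $\tfrac{n-k+1}{2}\int_\Omega\sigma_{k-1}(D^2u)|Du|^2\,dx$ --- exactly the quantity that the $P$-function controls pointwise --- while the boundary term is converted into $\tfrac{n-k+1}{2}c_0^2\int_\Omega\sigma_{k-1}(D^2u)\,dx$ using only the Minkowski integral \emph{formula} (an identity, no convexity inequality required). One then obtains $kC_n^k\int_\Omega u\,dx\le(n-k+1)\int_\Omega\sigma_{k-1}(D^2u)\,u\,dx$, which contradicts the pointwise MacLaurin bound $(n-k+1)\sigma_{k-1}(D^2u)\,u\le kC_n^k u$ (valid since $u<0$ and $\sigma_{k-1}(D^2u)\ge C_n^{k-1}$) unless $\sigma_{k-1}(D^2u)\equiv C_n^{k-1}$, whence $D^2u\equiv I$. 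If you replace your multiplier $x\cdot Du$ by $u$ in this way, your argument closes using only interior, pointwise inequalities and no boundary geometry beyond the Minkowski identity.
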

In space forms, a few work has been done to generalize the Serrin's symmetry  to equation $\Delta u+nKu=c$ using the method of moving planes or P functions and Rellich-Pohozaev type identities (see \cite{Ciraolo2019,KP1998,Molzon1991,QX2017,Souam2005} and reference therein).

The hyperbolic space $\mathbb H^n$ can be described as the warped product space $[0,\infty)\times\mathbb S^{n-1}$ equipped with the rotationally symmetric metric 
\begin{equation}g=dr^2+h^2g_{\mathbb S^{n-1}},\end{equation} where $h=\sinh r$, $g_{\mathbb S^{n-1}}$ is the round metric on the $n-1$ dimensional sphere.

In the present paper, we consider the overdetermined problem below in hyperbolic space, 
\begin{equation}\label{mainpb}
    \begin{aligned}
        \begin{cases}
            \sigma_k(D^2u-uI)=C_n^k&\quad\text{in }\Omega,\\
            u=0&\quad\text{on }\partial \Omega,\\
            u_\gamma=c_0&\quad\text{on }\partial\Omega,
        \end{cases}
    \end{aligned}
\end{equation}
where $\Omega$ is a bounded $C^2$ domain of $\mathbb H^n$. Our result is the following: 
\begin{theorem}\label{mainthm}
Let $\Omega\subset \mathbb H^n$ be a $C^2$ bounded domain and $u\in C^3(\Omega)\cap C^2(\overline\Omega)$ be a solution to \eqref{mainpb} with $k\in\{1,2,\cdots,n\}$ and $c_0$ a positive constant. Then $\Omega$ is a geodesic ball $B_R$, and $u$ is radially symmetric.
\end{theorem}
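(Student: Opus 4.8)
The plan is to carry out Weinberger's scheme in $\mathbb H^n$: produce a $P$-function that is a subsolution of the linearization of the equation, so that by the maximum principle it attains its maximum on $\partial\Omega$, where the overdetermined data force $P=c_0^2$; then derive a Rellich--Pohozaev identity which, together with the pointwise bound $P\le c_0^2$, forces $P\equiv c_0^2$; finally perform a rigidity analysis to identify $u$ and $\Omega$. Write $W:=D^2u-uI$, so the equation reads $\sigma_k(W)=C_n^k$, and work under the natural ellipticity condition that $W$ lies in the positive cone $\Gamma_k$ of $\sigma_k$; then $\sigma_j(W)>0$ for $1\le j\le k$ and the Newton tensors $T_j(W)$ are positive definite. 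Maclaurin's inequality gives $\sigma_1(W)\ge n\big(\sigma_k(W)/C_n^k\big)^{1/k}=n$, i.e. $\Delta u-nu\ge n>0$, so a maximum principle argument together with $u=0$ on $\partial\Omega$ yields $u<0$ in $\Omega$. The model solution on a geodesic ball $B_R$ is $u=(\cosh r-\cosh R)/\cosh R$, for which $W\equiv I$; one checks that $P:=|Du|^2-u^2-2u$ equals the constant $\tanh^2R=c_0^2$ on it. Two structural facts in $\mathbb H^n$ will be used: $D^2(\cosh r)=(\cosh r)\,g$, so $X:=\nabla(\cosh r)$ is a conformal field with $\nabla_iX_j=(\cosh r)\,g_{ij}$ and $\operatorname{div}X=n\cosh r$; and the Ricci commutation identities for third covariant derivatives in constant curvature show that $W$ is a Codazzi tensor, whence $\operatorname{div}T_j(W)=0$ for all $j$ — in particular $\operatorname{div}\big(\partial\sigma_k(W)/\partial W_{ij}\big)_i=0$.

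\textbf{Step 1 (maximum principle for $P$).} Let $L:=F^{ij}\nabla_i\nabla_j$ with $F^{ij}:=\partial\sigma_k(W)/\partial W_{ij}=T_{k-1}(W)^{ij}$, elliptic on $\Gamma_k$. Using the third-derivative commutation identities of $\mathbb H^n$ and the differentiated equation $F^{ij}\nabla_lW_{ij}=0$, a direct computation gives
\[
LP=2F^{ij}(W^2)_{ij}-2kC_n^k(1-u)-2u(n-k+1)\sigma_{k-1}(W),\qquad (W^2)_{ij}:=W_{il}W_{lj}.
\]
Since $F^{ij}(W^2)_{ij}=\sigma_1(W)\sigma_k(W)-(k+1)\sigma_{k+1}(W)$, the Newton--Maclaurin inequalities $\sigma_1(W)\ge n$, $\sigma_{k-1}(W)\ge C_n^{k-1}$ and $\sigma_{k+1}(W)\le\frac{k(n-k)}{(k+1)(n-k+1)}\,\frac{(C_n^k)^2}{\sigma_{k-1}(W)}$, combined with $u\le 0$, give $LP\ge 0$ in $\Omega$ after an elementary estimate. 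By the maximum principle $P$ attains its maximum over $\overline\Omega$ on $\partial\Omega$; there $u=0$ forces $Du=c_0\gamma$, hence $P=c_0^2$ on $\partial\Omega$ and $P\le c_0^2$ in $\Omega$.

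\textbf{Step 2 (Rellich--Pohozaev identity).} Combining the conformal field $X$ with the divergence-free tensor $T_{k-1}(W)$, one has
\[
\operatorname{div}\big(T_{k-1}(W)Du\big)=kC_n^k+u(n-k+1)\sigma_{k-1}(W),\qquad \operatorname{div}\big(T_{k-1}(W)X\big)=(n-k+1)(\cosh r)\sigma_{k-1}(W),
\]
together with a Pohozaev-type identity obtained from $\operatorname{div}\big((X\cdot Du)\,T_{k-1}(W)Du\big)$, which introduces a recursion through $T_k(W),T_{k+1}(W),\dots$. Integrating over $\Omega$ and evaluating the boundary integrals with the help of $u=0$, $u_\gamma=c_0$, and the vanishing of the mixed Hessian $\nabla_\alpha\nabla_\gamma u$ on $\partial\Omega$ (all boundary terms either vanish because $u=0$ or telescope), and then using the pointwise bound of Step 1, one arrives at
\[
\int_\Omega (\cosh r)\,\sigma_{k-1}(W)\,(c_0^2-P)\,dV\le 0 .
\]
Since $c_0^2-P\ge 0$, $\cosh r>0$ and $\sigma_{k-1}(W)>0$ on $\Omega$, this forces $P\equiv c_0^2$. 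This step is the main obstacle: one has to pin down the precise Pohozaev-type divergence identity in $\mathbb H^n$ — keeping track of the $\cosh r$-weights and of the Newton-tensor recursion — and verify that the numerical coefficients combine, as they do in the model case $k=1$, so that the surviving boundary terms reduce exactly to an integral of $(c_0^2-P)$ over $\partial\Omega$, which vanishes.

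\textbf{Step 3 (rigidity).} As $P$ is constant, $LP\equiv 0$, so every Newton--Maclaurin inequality in Step 1 is an equality; in particular $\sigma_1(W)=n$ and $\sigma_k(W)=C_n^k$ force all eigenvalues of $W$ to equal $1$, i.e. $D^2u=(u+1)I$. Set $v:=u+1$, so $D^2v=vI$, $v=1$ on $\partial\Omega$ and $v<1$ in $\Omega$. At an interior minimum $p$ of $v$ one has $D^2v(p)=v(p)\,g\ge 0$, hence $v(p)\ge 0$, and $v(p)=0$ is impossible since the restriction of $v$ to any geodesic through $p$ solves $w''=w$ with $w(0)=w'(0)=0$, which would give $v\equiv 0$, contradicting $v=1$ on $\partial\Omega$; thus $v>0$ on $\overline\Omega$. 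From $D^2v=vI$ one gets $\nabla(|Dv|^2-v^2)\equiv 0$, so $|Dv|^2-v^2\equiv -\mu$ with $\mu=v(p)^2\in(0,1)$. Then $V:=v/\sqrt\mu$ solves $D^2V=VI$ with $V(p)=1$ and $DV(p)=0$, hence $V=\cosh d(\cdot,p)$. Therefore $\Omega=\{v<1\}=B_R(p)$ with $\cosh R=1/\sqrt\mu$ (since $\partial\Omega$ is a closed $C^2$ hypersurface contained in the geodesic sphere $\partial B_R(p)$, hence equals it), $u=\sqrt\mu\,\cosh d(\cdot,p)-1$ is radially symmetric about $p$, and $c_0=\sqrt{1-\mu}$.
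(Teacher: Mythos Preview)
Your overall scheme matches the paper's: the $P$-function $\widetilde P=|Du|^2-u^2-2u$, the subsolution computation in Step~1, and the rigidity argument in Step~3 are all essentially what the paper does (the paper cites an Obata-type result for the last step, while you spell out the ODE argument for $D^2v=vI$; both are fine).

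The gap is Step~2, and you acknowledge it yourself (``This step is the main obstacle\ldots one has to pin down the precise Pohozaev-type divergence identity\ldots and verify that the numerical coefficients combine''). This is not a minor bookkeeping issue: it is exactly where the substance of the paper lies. Your description of how the boundary terms should behave---``all boundary terms either vanish because $u=0$ or telescope''---is not what actually happens. When you expand $\operatorname{div}\big((X\cdot Du)\,T_{k-1}(W)Du\big)$ you pick up a term $T_{k-1}^{ij}u_iu_j\,V$ in the interior and, on integrating, a boundary term $\int_{\partial\Omega}\sigma_k^{ij}(W)u_iu_j\,V_\gamma\,d\sigma=c_0^{k+1}\int_{\partial\Omega}H_{k-1}\,V_\gamma\,d\sigma$. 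This does \emph{not} telescope and does \emph{not} vanish; it is a genuine curvature integral over $\partial\Omega$ that must be converted back into a volume integral. The paper does this via the hyperbolic Minkowski integral formula
\[
\int_{\partial\Omega}\frac{H_{k-1}}{C_{n-1}^{k-1}}\,V_\gamma\,d\sigma=\int_{\partial\Omega}\frac{H_{k-2}}{C_{n-1}^{k-2}}\,V\,d\sigma,
\]
followed by a second application of the divergence theorem, to obtain $\int_{\partial\Omega}\sigma_k^{ij}(W)u_iu_j\,V_\gamma\,d\sigma=(n-k+1)c_0^2\int_\Omega\sigma_{k-1}(W)\,V\,dx$. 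This Minkowski step is the missing ingredient in your proposal; the ``recursion through $T_k,T_{k+1},\dots$'' you allude to does not by itself close the identity, because each step of that recursion produces a new boundary term involving a higher Newton tensor contracted with $X$, and there is no mechanism in your outline for these to cancel.

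Concretely, the paper's Pohozaev identity (obtained by multiplying the equation by $uV$ and integrating by parts) reads
\[
kC_n^k\int_\Omega uV\,dx=\frac{n-k+1}{2}\int_\Omega\sigma_{k-1}(W)\,V\,(|Du|^2-u^2-c_0^2)\,dx,
\]
after the Minkowski conversion of the boundary term. Combined with $\widetilde P\le c_0^2$ from Step~1 this gives $kC_n^k\int_\Omega uV\le(n-k+1)\int_\Omega\sigma_{k-1}(W)\,uV$, and then Maclaurin together with $uV<0$ forces $\sigma_{k-1}(W)\equiv C_n^{k-1}$, hence $W\equiv I$. Your target inequality $\int_\Omega V\,\sigma_{k-1}(W)(c_0^2-\widetilde P)\le 0$ is in fact equivalent to this, but you cannot reach it without the Minkowski formula or an explicit substitute for it.
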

By maximum principle, $u<0$ in $\Omega$, and the solution to Dirichlet problem of $\sigma_k(D^2u-uI)=C_n^k$ is unique. In theorem \ref{mainthm}, if we assume the center of $B_R$ is the origin, then 
$ u(r)=1-\frac{\cosh{r}}{\cosh{R}} $ is a unique solution to \eqref{mainpb}, 
where $r$ is the distance from $0$, $R$ and $c_0$ satisfy the relationship 
$ \frac{\sinh R}{\cosh R}=c_0 $.

In the next section, we recall some notations  concerning the Hessian equation in Euclidean space and hyperbolic space. Some known facts about curvature of level sets and Minkowskian integral formulas are also introduced there. In the third section, we use a P function and a Rellich-Pohozaev type identity for problem \eqref{Eucpb} to give an alternative proof of theorem \ref{2thm}. In the last section, we  derive the Rellich-Pohozaev type identity for problem \eqref{mainpb}, then combine with a P function to present the proof of theorem \ref{mainthm}.

\section{Preliminary}
\subsection{Elementary symmetric functions}
Let $\mathcal S^n$ be the space of real symmetric $n\times n$ matrices. We denote by $A=(a_{ij})$ a matrix in $\mathcal S^n$, and by $\lambda_1,\cdots, \lambda_n$ the eigenvalues of $A$. For $1\leq k\leq n$, we recall the definition of $k$-th elementary symmetric functions of $A$,

\begin{equation}\sigma_k(A)=\sigma_k(\lambda_1,\cdots,\lambda_n)=\sum_{1\leq i_1<\cdots<i_k\leq n}\lambda_{i_1}\cdots\lambda_{i_k}.\end{equation}
Denote by
\begin{equation}\sigma_k^{ij}(A):=\frac{\partial\sigma_k(A)}{\partial a_{ij}},\end{equation}
then it is easy to see from the definition above that
\begin{equation}\sigma_k(A)=\frac1k\sum_{i,j=1}^n\sigma_k^{ij}(A)a_{ij}\quad\text{and}\quad(n-k+1)\sigma_{k-1}(A)=\sum_{i=1}^n\sigma_k^{ii}(A).\end{equation}
For $1\leq k\leq n-1$, Newton's inequalities say that
\begin{equation}\label{Newton}(n-k+1)(k+1)\sigma_{k-1}(A)\sigma_{k+1}(A)\leq k(n-k)\sigma_k^2(A).\end{equation}
For $1\leq k\leq n$, recall that the Garding cone is defined as
\begin{equation}\Gamma_k=\{A\in\mathcal S^n: \sigma_1(A)>0,\cdots,  \sigma_k(A)>0\}.\end{equation}
Assume that $A\in \Gamma_k$, the following inequalities, known as MacLaurin inequalities hold,
\begin{equation}\label{MacLaurin}\frac{\sigma_k(A)}{C_n^k}\leq \frac{\sigma_l(A)}{C_n^l},\quad\forall\ k\geq l\geq 1.\end{equation}
The equalities hold if the eigenvalues $\lambda_1,\cdots,\lambda_{n}$ of $A$ are equal to each other.
Assume that $A\in \Gamma_k$, the following inequalities also holds,
\begin{equation}\label{NMieq}
    \frac{{\sigma_k(A)}/{C_n^k}}{{\sigma_{k-1}(A)}/{C_n^{k-1}}}\leq \frac{{\sigma_l(A)}/{C_n^l}}{{\sigma_{l-1}(A)}/{C_n^{l-1}}}\quad\forall \ k\geq l\geq 1.
\end{equation}
The following proposition can be found in \cite{Reilly1974}, see also \cite{Xia2021} for non-symmetric matrices.
\begin{proposition} \label{keyprop}
For any $n\times n$  matrix $A$, we have
\begin{equation}\label{prop2.1}
    \sigma_k^{ij}(A)=\sigma_{k-1}(A)\delta_{ij}-\sum_{l=1}^n\sigma_{k-1}^{il}(A)a_{jl}.
\end{equation}
\end{proposition}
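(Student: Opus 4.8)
The plan is to prove \eqref{prop2.1} by encoding all the elementary symmetric functions in the single generating polynomial $P(t):=\det(I+tA)=\sum_{k=0}^{n} t^k\sigma_k(A)$ (with $\sigma_0\equiv 1$), where all $n^2$ entries of $A$ are treated as independent variables, so that $\sigma_k^{ij}(A)=\partial\sigma_k/\partial a_{ij}$ is unambiguous. First I would differentiate $P(t)$ in $a_{ij}$: since $(I+tA)_{pq}=\delta_{pq}+t a_{pq}$, the chain rule together with $\partial\det M/\partial M_{pq}=\mathrm{adj}(M)_{qp}$ gives
\[
\sum_{k} t^k\sigma_k^{ij}(A)=\frac{\partial P}{\partial a_{ij}}=t\,\mathrm{adj}(I+tA)_{ji}.
\]

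Next I would feed this into the classical adjugate identity $(I+tA)\,\mathrm{adj}(I+tA)=\det(I+tA)\,I$. Written out componentwise and multiplied through by $t$, it becomes
\[
t\,\mathrm{adj}(I+tA)_{pr}+t\sum_{q} a_{pq}\cdot t\,\mathrm{adj}(I+tA)_{qr}=t\Bigl(\sum_{m} t^m\sigma_m(A)\Bigr)\delta_{pr}.
\]
Substituting the series from the previous display for each factor $t\,\mathrm{adj}(I+tA)_{\bullet\bullet}$ and comparing the coefficient of $t^k$ on both sides — the two terms that carry the compensating factor of $t$ contributing $\sigma_{k-1}$ and $\sigma_{k-1}^{rq}$ rather than $\sigma_k$ and $\sigma_k^{rq}$ — one obtains $\sigma_k^{rp}(A)+\sum_q a_{pq}\,\sigma_{k-1}^{rq}(A)=\sigma_{k-1}(A)\delta_{pr}$, which is exactly \eqref{prop2.1} after the renaming $(r,p,q)\mapsto(i,j,l)$.

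The computation is entirely mechanical; the only delicate point — and the usual source of index or sign errors — is the bookkeeping of the extra power of $t$ produced both by differentiating $\det(I+tA)$ and by isolating the adjugate, since it is precisely that shift which makes $\sigma_{k-1}^{il}$, rather than $\sigma_k^{il}$, appear on the right-hand side. An equivalent, purely combinatorial route avoids generating functions altogether: write $\sigma_k(A)=\tfrac1{k!}\sum\delta^{i_1\cdots i_k}_{j_1\cdots j_k}a_{i_1 j_1}\cdots a_{i_k j_k}$ in terms of the generalized Kronecker symbol, differentiate in $a_{ij}$, and expand $\delta^{i\,i_2\cdots i_k}_{j\,j_2\cdots j_k}$ by the Laplace rule along its first column; the leading term produces $\sigma_{k-1}(A)\delta_{ij}$, while the remaining $k-1$ equal terms each contribute $-\tfrac1{k-1}\sum_l\sigma_{k-1}^{il}(A)a_{jl}$. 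Either way one lands on \eqref{prop2.1}.
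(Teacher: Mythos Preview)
Your argument is correct. Note, however, that the paper itself does not supply a proof of this proposition: it is recorded as a known identity with references to Reilly \cite{Reilly1974} and, for not-necessarily-symmetric matrices, to \cite{Xia2021}, so there is nothing in the paper to compare your approach against. Both routes you outline---the generating-function argument via $P(t)=\det(I+tA)$ together with the adjugate identity $(I+tA)\,\mathrm{adj}(I+tA)=\det(I+tA)I$, and the direct combinatorial expansion of $\sigma_k$ through the generalized Kronecker symbol---are standard and valid derivations of \eqref{prop2.1}; your bookkeeping of the extra factor of $t$ and of the index transposition coming from $\partial\det M/\partial M_{pq}=\mathrm{adj}(M)_{qp}$ is accurate and lands exactly on the stated formula.
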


In the following we write $D, D^2$ and $\Delta$ for the gradient, Hessian and Laplacian on hyperbolic space $\mathbb H^n$. For simplicity, we will use $u_i, \ u_{ij},\ \cdots $ and $u_\gamma$ to denote covariant derivatives and normal derivative of function $u$ with respect to the metric on $\mathbb H^n$. We write $X\cdot Y$ instead of $g(X,Y)$ for vector fields $X,Y$. We also follow Einstein's summation convention.

\subsection{Hessian operators}
\subsubsection{Hessian operators in Euclidean space}
Let $\Omega$ be an open subset of $\mathbb R^n$ and let $u\in C^2(\Omega)$. The $k$-Hessian operator $\mathcal S_k[u]$ is defined as the $k$-th elementary symmetric function $\sigma_k(D^2u)$ of $D^2u$. Notice that 
\begin{equation}\mathcal S_1[u]=\Delta u\quad\text{and}\quad \mathcal S_n[u]=\mathrm{det}D^2u.\end{equation}
A function $u$ is called $k$-convex in $\Omega$, if $D^2u(x)\in \Gamma_k$ for any $x\in \Omega$. A direct computation yields that $(\sigma_k^{1j}(D^2u),\cdots,\sigma_k^{nj}(D^2u))$ is divergence free, that is 
\begin{equation}\label{f2.10}
    \frac{\partial}{\partial x_i}\sigma_k^{ij}(D^2u)=0.
\end{equation}
By using \eqref{prop2.1}, it is easy to see that
\begin{equation}\label{f2.11}
    \sigma_k^{ij}(D^2u)u_{il}=\sigma_k^{il}(D^2u)u_{ij}.
\end{equation}
\subsubsection{Hessian operators in hyperbolic space}
Let $\Omega$ be an open subset of $\mathbb H^n$ and let $u\in C^2(\Omega)$. The $k$-Hessian operator $S_k[u]$ is defined as the $k$-th elementary symmetric function $\sigma_k(D^2u-uI)$ of $D^2u-uI$. Notice that 
\begin{equation}S_1[u]=\Delta u-nu\quad\text{and}\quad S_n[u]=\mathrm {det}(D^2u-uI).\end{equation}

A function $u$ is called \textit{$k$-admissible} in $\Omega$ if $D^2u(x)-u(x)I\in\Gamma_k$ for any $x\in \Omega$. A function $u\in C^2(\Omega)$ is called an \textit{admissible solution} of $S_k[u]=f$ in $\Omega$, if $u$ solves the equation and $u$ is an $k$-admissible function.

We list the following propositions with proofs, which will be used in the following sections.

\begin{proposition}\label{propdivfree}Suppose $u\in C^3(\Omega)$, then
\begin{equation}\label{divfree}
    D_i(\sigma_k^{ij}(D^2u-uI))=0.
\end{equation}
\end{proposition}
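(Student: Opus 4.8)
The plan is to adapt to $\mathbb H^n$ the Euclidean argument that produced \eqref{f2.10}: the only genuinely new feature is that commuting third covariant derivatives on a space form generates a curvature term, and the whole point of the proposition is that this term is exactly cancelled by the shift $-uI$ built into $W:=D^2u-uI$. Throughout I work in a local orthonormal frame, write $W_{ij}=u_{ij}-u\,\delta_{ij}$ and $u_{ijk}=D_ku_{ij}$ (the last index being the outermost derivative); since $u\in C^3(\Omega)$ we have $W\in C^1(\Omega)$, so every derivative below is legitimate.

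First I establish a Codazzi-type identity for $W$. Because $\mathbb H^n$ has constant sectional curvature $-1$, the Ricci identity for commuting covariant derivatives, applied to the one-form $Du$, gives in an orthonormal frame
\[
u_{ijk}-u_{ikj}=\delta_{ij}u_k-\delta_{ik}u_j .
\]
Since $D_kW_{ij}=u_{ijk}-u_k\delta_{ij}$ and $D_jW_{ik}=u_{ikj}-u_j\delta_{ik}$, subtracting yields
\[
D_kW_{ij}-D_jW_{ik}=(u_{ijk}-u_{ikj})-u_k\delta_{ij}+u_j\delta_{ik}=0 .
\]
Hence $D_kW_{ij}=D_jW_{ik}$, and together with $W_{ij}=W_{ji}$ this says $DW$ is a totally symmetric $3$-tensor — precisely the property that makes $W$ play, on $\mathbb H^n$, the role that the Hessian $D^2u$ (with $\partial_k(D^2u)_{ij}=u_{ijk}$ fully symmetric) plays in $\mathbb R^n$. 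Had we used $D^2u$ rather than $W$, the curvature term $\delta_{ij}u_k-\delta_{ik}u_j$ would survive; it is exactly the $-u\,\delta_{ij}$ summand that annihilates it.

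Next I argue by induction on $k$, exactly as in the Euclidean proof of \eqref{f2.10}. For $k=1$ we have $\sigma_1^{ij}(W)=\delta_{ij}$ and $D_i\delta_{ij}=0$. Assuming $D_i\sigma_{k-1}^{il}(W)=0$, apply Proposition \ref{keyprop} to the matrix $W$ to get
\[
\sigma_k^{ij}(W)=\sigma_{k-1}(W)\,\delta_{ij}-\sigma_{k-1}^{il}(W)\,W_{jl};
\]
differentiating, and using the product rule together with the inductive hypothesis and $D_i\delta_{ij}=0$,
\[
D_i\sigma_k^{ij}(W)=D_j\sigma_{k-1}(W)-\sigma_{k-1}^{il}(W)\,D_iW_{jl}.
\]
Now $D_j\sigma_{k-1}(W)=\sigma_{k-1}^{pq}(W)\,D_jW_{pq}$ by the chain rule, while the Codazzi identity of the previous step together with the symmetry of $W$ gives $D_iW_{jl}=D_jW_{il}$; relabelling the summation indices, $\sigma_{k-1}^{il}(W)D_iW_{jl}=\sigma_{k-1}^{pq}(W)D_jW_{pq}$, so the two terms cancel and $D_i\sigma_k^{ij}(W)=0$, completing the induction.

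The only real obstacle is the sign bookkeeping in the first step: one must check that the curvature contribution to $u_{ijk}-u_{ikj}$ on $\mathbb H^n$ has exactly the right magnitude and sign to cancel the terms coming from differentiating $u\,\delta_{ij}$; once that is in place, the rest is a verbatim repetition of the Euclidean computation. As a consistency check, when $k=2$ one has $\sigma_2^{ij}(W)=(\Delta u-(n-1)u)\delta_{ij}-u_{ij}$, so $D_i\sigma_2^{ij}(W)=D_j\Delta u-(n-1)u_j-D_iu_{ij}$, which vanishes by the Bochner identity $D_iu_{ij}=D_j\Delta u+\mathrm{Ric}(Du,\cdot)_j=D_j\Delta u-(n-1)u_j$ valid on $\mathbb H^n$; and for the radial solution of \eqref{mainpb} one has $W=I$, for which both sides of the claimed identity trivially vanish.
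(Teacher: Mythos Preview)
Your proof is correct and follows essentially the same route as the paper's: both argue by induction on $k$ via the recursion of Proposition~\ref{keyprop}, and both hinge on the cancellation that makes $D_kW_{ij}$ totally symmetric in $\mathbb H^n$. The only difference is organizational --- you isolate the Codazzi-type identity $D_kW_{ij}=D_jW_{ik}$ as a preliminary lemma before running the induction, whereas the paper leaves this cancellation implicit in the final line $\sigma_{k-1}^{js}(W)\bigl((u_{js}-u\delta_{js})_i-(u_{si}-u\delta_{si})_j\bigr)=0$ of its inductive step; the content is the same.
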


\begin{proof}
We prove it by induction. For $k=1$, \eqref{divfree} holds obviously. For $k=2$, since $u_{ilj}=u_{ijl}-u_l\delta_{ij}+u_j\delta_{il}$, we have
 \begin{equation}\begin{aligned}D_j\sigma_2^{ij}(D^2u-uI)=&D_j(\sigma_1(D^2u-uI)\delta_{ij}-\sigma_1^{js}(u_{si}-u\delta_{si}))\\=&(\Delta u-nu)_i-(u_{ji}-u\delta_{ji})_{j}=0.\end{aligned}\end{equation}
Suppose that \eqref{divfree} holds for $k-1$, then
\begin{equation}\begin{aligned}
 D_j(\sigma_k^{ij}(D^2u-uI))=&D_j(\sigma_{k-1}(D^2u-uI)\delta_{ij}-\sigma_{k-1}^{js}(D^2u-uI)(u_{si}-u\delta_{si}))\\
 =&D_i\sigma_{k-1}(D^2u-uI)-\sigma_{k-1}^{js}(D^2u-uI)(u_{si}-u\delta_{si})_{j}\\
 =&\sigma_{k-1}^{js}(D^2u-uI)((u_{js}-u\delta_{js})_i-(u_{si}-u\delta_{si})_{j})\\
 =&0.
 \end{aligned}\end{equation}
\end{proof}

\begin{proposition}
Let $u\in C^2(\Omega)$, then
\begin{equation}\label{changeindex}
    \sigma_k^{ij}(D^2u-uI)u_{il}=\sigma_k^{il}(D^2u-uI)u_{ij}.
\end{equation}
\end{proposition}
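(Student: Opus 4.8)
The plan is to deduce \eqref{changeindex} from the corresponding purely algebraic identity for an arbitrary symmetric matrix, which is essentially already recorded in the Euclidean setting as \eqref{f2.11}. The first step is to notice that the derivation of \eqref{f2.11} from Proposition \ref{keyprop} never uses that the matrix in question is a Hessian: for any symmetric $A=(a_{ij})\in\mathcal S^n$, formula \eqref{prop2.1} gives $\sigma_k^{ij}(A)=\sigma_{k-1}(A)\delta_{ij}-\sigma_{k-1}^{is}(A)a_{js}$, and contracting with $a_{il}$ yields $\sigma_k^{ij}(A)a_{il}=\sigma_{k-1}(A)a_{jl}-\sigma_{k-1}^{is}(A)a_{js}a_{il}$. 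Both terms on the right are symmetric under the interchange $j\leftrightarrow l$ — the first because $A$ is symmetric, the second after relabelling the summation pair $i\leftrightarrow s$ and using the symmetry of $\sigma_{k-1}^{is}(A)$ — so $\sigma_k^{ij}(A)a_{il}=\sigma_k^{il}(A)a_{ij}$. (Alternatively one may prove this identity by a short induction on $k$ along the lines of the proof of Proposition \ref{propdivfree}.)

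The second step is to apply this with $A=D^2u-uI$, so that $a_{il}=u_{il}-u\delta_{il}$, and to substitute $u_{il}=a_{il}+u\delta_{il}$. Since $\sum_i\sigma_k^{ij}(A)\delta_{il}=\sigma_k^{lj}(A)$, one gets $\sigma_k^{ij}(D^2u-uI)u_{il}=\sigma_k^{ij}(A)a_{il}+u\,\sigma_k^{lj}(A)$, and likewise $\sigma_k^{il}(D^2u-uI)u_{ij}=\sigma_k^{il}(A)a_{ij}+u\,\sigma_k^{jl}(A)$. The two leading terms coincide by the algebraic identity of the first step, and the two remaining terms coincide because $A$ is symmetric, hence $\sigma_k^{ij}(A)$ is symmetric in $i$ and $j$ and $\sigma_k^{lj}(A)=\sigma_k^{jl}(A)$. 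This establishes \eqref{changeindex}.

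I do not expect a genuine obstacle here: the only points needing a little care are the index bookkeeping when verifying that $\sigma_{k-1}^{is}(A)a_{js}a_{il}$ is symmetric in $j$ and $l$, and the elementary observation that replacing $D^2u$ by $D^2u-uI$ merely shifts the argument of $\sigma_k$ by a multiple of the identity, which is absorbed cleanly because the derivative coefficients $\sigma_k^{ij}$ are symmetric.
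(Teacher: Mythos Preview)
Your proposal is correct and follows essentially the same approach as the paper: both first establish the algebraic identity $\sigma_k^{ij}(A)a_{il}=\sigma_k^{il}(A)a_{ij}$ for the symmetric matrix $A=D^2u-uI$ via Proposition~\ref{keyprop}, and then add back the term $u\,\sigma_k^{lj}(A)$ to pass from $a_{il}=u_{il}-u\delta_{il}$ to $u_{il}$. The only cosmetic difference is that you state the first step for a general symmetric $A$, whereas the paper carries $D^2u-uI$ through the computation explicitly.
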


\begin{proof}
By proposition \ref{keyprop}, we obtain
\begin{equation}\begin{aligned}
\sigma_k^{ij}(D^2u-uI)(u_{il}-\delta_{il})
=&\big(\sigma_{k-1}(D^2u-uI)\delta_{ij}-\sigma_{k-1}^{is}(D^2u-uI)(u_{js}-u\delta_{js})\big)(u_{il}-\delta_{il})\\
=&\big(\sigma_{k-1}(D^2u-uI)\delta_{sl}-\sigma_{k-1}^{is}(D^2u-uI)(u_{il}-u\delta_{il})\big)(u_{js}-\delta_{js})\\
=&\sigma_{k}^{sl}(D^2u-uI)(u_{is}-u\delta_{js}).
\end{aligned}\end{equation}
Thus,
\begin{equation}\begin{aligned}
    \sigma_k^{ij}(D^2u-uI)u_{il}=&\sigma_k^{ij}(D^2u-uI)(u_{il}-u\delta_{il})+\sigma_k^{lj}(D^2u-uI)u\\=&\sigma_k^{il}(D^2u-uI)(u_{ij}-u\delta_{ij})+\sigma_k^{lj}(D^2u-uI)u\\=&\sigma_k^{il}(D^2u-uI)u_{ij}.
\end{aligned}\end{equation}
\end{proof}

\subsection{Minkowskian integral formulas}
Let $\Omega$ be a $C^2$ bounded domain, and $\partial\Omega$ is the boundary of $\Omega$. Denote the principle curvatures of $\partial\Omega$ by $\kappa=(\kappa_1,\cdots,\kappa_{n-1})$. The $k$-th curvature of $\partial\Omega$ is defined as
\begin{equation}H_k:=\sigma_k(\kappa)\quad k=1,\cdots,n-1.\end{equation}
$\Omega$ is called \textit{$k$-convex} with $k\in\{1,\cdots,n-1\}$, if $H_i>0$ for $i=1,\cdots,k$. In particular, $n-1$-convex is strictly convex, 1-convex is also called mean convex.

In the theory of convex bodies and differential geometry, Minkowskian integral formula (see\cite{Hsiung1954,Hsiung1956,Schneider1993}) is widely known. Suppose $\Omega$ is a domain of $\mathbb R^n$, then the Minkowskian integral formula says
\begin{equation}\label{Minkowskionrn}
    \int_{\partial\Omega}\frac{H_k}{C_{n-1}^{k}}x\cdot\gamma \mathrm d \sigma=\int_{\partial\Omega}\frac{H_{k-1}}{C_{n-1}^{k-1}} \mathrm d\sigma.
\end{equation}
Suppose $\Omega$ is a domain of $\mathbb H^n$. Let $p\in \Omega$, $r$ be the distance from $p$. Let $V(x)=\cosh(r(x))$, then $D^2V=VI$ and $\Delta V=nV$. Then the Minkowskian integral formula says
\begin{equation}\label{Minkowskionhn}
    \int_{\partial\Omega}\frac{H_k}{C_{n-1}^{k}}V_\gamma\mathrm d \sigma=\int_{\partial\Omega}\frac{H_{k-1}}{C_{n-1}^{k-1}}V \mathrm d\sigma.
\end{equation}

\subsection{Curvatures of level sets}
Let $u$ be a smooth function in space form $\mathbb R^n,\mathbb S^n$, or $\mathbb H^n$, for any regular $c\in\mathbb R$ of u (that is, $Du(x)\neq 0$ for any $x\in\mathbb H^n$ such that $u(x)=c$),
 the level set $\Sigma_c:=u^{-1}(c)$ is a smooth hypersurface by the implicit function theorem. The $k$-th order curvature $H_k$ of the level set $\Sigma_c$ is given by
\begin{equation}\label{curvoflvlset}
    H_{k-1}=\frac{\sigma_k^{ij}(D^2u)u_iu_j}{|Du|^{k+1}},
\end{equation}
which can be found in \cite{MZ2014,Reilly1974}.

\section{Overdetermined problem in $\mathbb R^n$}

In this section, we present a Rellich-Pohozaev type identity for $\sigma_k(D^2u)=C_n^k$ with zero Dirichlet boundary condition, and use a P function to give a proof of theorem \ref{2thm}.

The following lemma is proven in \cite{Salani2008}, which implies the solution to \eqref{Eucpb} is $k$-convex. This ensures MacLaurin inequalities \eqref{MacLaurin} can be applied.
\begin{lemma}[\cite{Salani2008}]\label{ukconvex}
Let $\Omega\subset\mathbb R^n$ be a bounded $C^2$ domain and $u\in C^2(\overline\Omega)$ is a solution to \eqref{Eucpb}, then $u$ is $k$-convex in $\Omega$.
\end{lemma}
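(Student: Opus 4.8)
The plan is to show that the continuous matrix field $x\mapsto D^2u(x)$ takes all of its values, for $x\in\Omega$, in the open cone $\Gamma_k$. I would carry this out in three steps: (i) produce one point of $\Omega$ at which $D^2u\in\Gamma_k$, which is where the overdetermined boundary data enter; (ii) observe that the relevant ``sheet'' of the algebraic set $\Sigma:=\{A\in\mathcal S^n:\sigma_k(A)=C_n^k\}$ is open and closed in $\Sigma$; and (iii) conclude by connectedness of $\Omega$.

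For step (i), since $u\in C^2(\overline\Omega)$ and $\overline\Omega$ is compact, $u$ attains its minimum over $\overline\Omega$ at some $x_0$. If $x_0\in\partial\Omega$, then $\min_{\overline\Omega}u=u(x_0)=0$, so $u\ge0$ throughout $\Omega$; but then at any $p\in\partial\Omega$ the inward one-sided derivative $\lim_{t\to0^+}u(p-t\gamma)/t=-u_\gamma(p)$ would be $\ge0$, contradicting $u_\gamma=c_0>0$. Hence $x_0\in\Omega$ and $D^2u(x_0)$ is positive semidefinite, so in particular $D^2u(x_0)\in\overline{\Gamma_k}$. Since $\sigma_k(D^2u(x_0))=C_n^k>0$, the MacLaurin inequalities \eqref{MacLaurin}, which pass to $\overline{\Gamma_k}$ by continuity, force $\sigma_l(D^2u(x_0))>0$ for every $1\le l\le k$; that is, $D^2u(x_0)\in\Gamma_k$.

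For step (ii), put $\mathcal M:=\Gamma_k\cap\Sigma$. It is open in $\Sigma$ since $\Gamma_k$ is open. It is closed in $\Sigma$: if $A\in\Sigma$ is a limit of points of $\mathcal M\subset\Gamma_k$, then $A\in\overline{\Gamma_k}$ with $\sigma_k(A)=C_n^k>0$, so the MacLaurin argument of step (i) gives $A\in\Gamma_k$, hence $A\in\mathcal M$. (Equivalently, $\partial\Gamma_k\subset\{\sigma_k=0\}$, so one cannot continuously leave $\Gamma_k$ while keeping $\sigma_k$ equal to the positive constant $C_n^k$; equivalently again, $\Gamma_k$ is a connected component of $\{\sigma_k>0\}$.) For step (iii): $x\mapsto D^2u(x)$ is continuous on the connected set $\Omega$ with image contained in $\Sigma$, so this image is a connected subset of $\Sigma$ that meets the clopen set $\mathcal M$ at $D^2u(x_0)$; therefore the image is contained in $\mathcal M\subset\Gamma_k$. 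This proves that $u$ is $k$-convex in $\Omega$.

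The point I expect to need the most care is the use of the sign of $c_0$ in step (i). When $k$ is even, $\Sigma$ also contains negative definite matrices (a ``concave sheet'', e.g. matrices $A$ with $-A\in\Gamma_k$), and a solution with $D^2u$ on that sheet would be $k$-concave rather than $k$-convex; the assumption $c_0>0$ is precisely what forces $u$ to take a negative value, hence to possess an interior minimum, pinning the image of $D^2u$ to the convex sheet $\mathcal M$. Once this selection is in place and one knows $\mathcal M$ is a connected component of $\Sigma$, the remaining steps are purely topological.
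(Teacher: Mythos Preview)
Your argument is correct. The paper does not prove this lemma directly (it cites \cite{Salani2008}), but the proof it supplies for the hyperbolic analogue, Lemma~\ref{dlemma4.5}, is described there as ``almost the same'' as the cited Euclidean one and differs from yours in step~(i): rather than locating an interior minimum, one chooses a \emph{boundary} point $x_0\in\partial\Omega$ at which all principal curvatures of $\partial\Omega$ are nonnegative (such a point exists on any compact $C^2$ hypersurface), writes $D^2u(x_0)$ in an adapted frame so that it is diagonal with entries $c_0\kappa_1,\dots,c_0\kappa_{n-1},u_{\gamma\gamma}$, and then uses Newton's inequalities \eqref{Newton} iteratively to descend from $\sigma_k(D^2u(x_0))=C_n^k>0$ to $\sigma_j(D^2u(x_0))>0$ for every $j\le k$. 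Your interior-minimum route is shorter and more elementary: positive semidefiniteness of $D^2u(x_0)$ together with $\sigma_k(D^2u(x_0))>0$ already forces at least $k$ eigenvalues to be strictly positive and the rest nonnegative, so $D^2u(x_0)\in\Gamma_k$ without any boundary computation or inductive use of \eqref{Newton}; incidentally the same idea would work in $\mathbb H^n$, since there $u(x_0)<0$ makes $D^2u(x_0)-u(x_0)I$ outright positive definite. Your steps~(ii)--(iii) are the same in substance as the paper's closing sentence ``from the smoothness of $u$, $u$ is $k$-admissible in $\Omega$'', which you have simply unpacked via the clopen/connectedness argument (equivalently, the standard fact that $\Gamma_k$ is a connected component of $\{\sigma_k>0\}$).
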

P functions have been extensively investigated and inspired several effective works in the context of elliptic partial differential equations. For fully nonlinear equations, the P function for 2-dimensional Monge-Amp\`ere equation was given by Ma \cite{Ma1999}, and the P functions for $k$-Hessian equations and $k$-curvature equations were given by Philippin and Safoui \cite{Safoui}. Based on lemma \ref{ukconvex}, we are able to 
derive the lemma below, which was prove in \cite{Safoui}. So we can apply the maximum principle on the P function. For completeness, we present the proof. 
\begin{lemma}[\cite{Safoui}]\label{P}
Let $u\in C^3(\Omega)$ be an admissible solution of $\sigma_k(D^2u)=C_n^k$ in $\Omega\subset\mathbb R^n$. Then the following P function 
\begin{equation}\label{eq:p}P:=|Du|^2-2u\end{equation} satisfies 
\begin{equation}\sigma_k^{ij}(D^2u)P_{ij}\geq 0.\end{equation}
\end{lemma}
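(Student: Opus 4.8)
The plan is to compute $\sigma_k^{ij}(D^2u)P_{ij}$ directly from $P = |Du|^2 - 2u$ and then invoke the divergence-free structure \eqref{f2.10}, the symmetry identity \eqref{f2.11}, and the $k$-convexity from Lemma \ref{ukconvex} together with Newton's inequality \eqref{Newton}. First I would differentiate $P$ twice: $P_i = 2u_l u_{li} - 2u_i$ and $P_{ij} = 2u_{li}u_{lj} + 2u_l u_{lij} - 2u_{ij}$. Contracting against $\sigma_k^{ij} := \sigma_k^{ij}(D^2u)$ gives three terms. For the middle one, since in $\mathbb R^n$ third covariant derivatives commute, $u_{lij} = (u_{ij})_l$, so $\sigma_k^{ij}u_l u_{lij} = u_l \, D_l\!\big(\sigma_k(D^2u)\big) + $ (a correction coming from moving $\sigma_k^{ij}$ inside the $l$-derivative); but by \eqref{f2.10} that correction vanishes, and $\sigma_k(D^2u) \equiv C_n^k$ is constant, so this term is zero. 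For the last term, $\sigma_k^{ij}u_{ij} = k\,\sigma_k(D^2u) = k\,C_n^k$. Hence
\begin{equation}
\sigma_k^{ij}(D^2u)P_{ij} = 2\,\sigma_k^{ij}(D^2u)\,u_{li}u_{lj} - 2k\,C_n^k.
\end{equation}

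The crux is then the pointwise inequality $\sigma_k^{ij}(A)\,a_{li}a_{lj} \ge k\,\sigma_k(A)$ for $A = D^2u \in \overline{\Gamma_k}$, equivalently (writing everything in eigenvalues, using that $\sigma_k^{ij}$ and $A$ are simultaneously diagonalizable) the statement $\sum_i \sigma_k^{ii}(\lambda)\lambda_i^2 \ge k\,\sigma_k(\lambda)$ on the Garding cone. This is a standard consequence of the Newton–MacLaurin machinery: one writes $\sum_i \lambda_i^2 \sigma_k^{ii} = \sigma_1 \sum_i \lambda_i \sigma_k^{ii} - \sum_{i\ne j}\lambda_i\lambda_j \sigma_k^{ii}$, uses $\sum_i \lambda_i \sigma_k^{ii} = k\sigma_k$ and the identity $\sum_{i\ne j}\lambda_i\lambda_j\sigma_k^{ii} = (k-1)(n-k+1)/\,(\cdots)\cdot$ (a combinatorial rearrangement yielding a multiple of $\sigma_1\sigma_k - (k+1)\sigma_{k+1}$), and finally applies the Newton inequality \eqref{Newton} in the form $\sigma_1\sigma_k \ge \frac{(k+1)(n-k)}{k(n-k+1)}\,$... to conclude. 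Alternatively, and more cleanly, one can apply \eqref{f2.11} to write $\sigma_k^{ij}u_{li}u_{lj} = \sigma_k^{il}u_{ij}u_{lj}$ and then recognize the right side as $\sigma_k^{il}(D^2u)\,(D^2u \cdot D^2u)_{il}$, reducing to the eigenvalue inequality $\sum \mu_i \sigma_k^{ii}(\mu^{1/?})\dots$; in any case the inequality $\sigma_k^{ij}(A)a_{il}a_{jl}\ge k\sigma_k(A)$ on $\overline\Gamma_k$ is the key lemma.

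I expect the main obstacle to be exactly this last pointwise algebraic inequality: it requires care with the combinatorics of $\sigma_k^{ij}$ and the correct application of Newton's inequalities, and one must make sure the argument is valid on the closed cone $\overline{\Gamma_k}$ (which is where Lemma \ref{ukconvex} places $D^2u$) rather than only in the interior. Once that is in hand the proof is immediate: $\sigma_k^{ij}(D^2u)P_{ij} = 2\big(\sigma_k^{ij}u_{li}u_{lj} - k\,C_n^k\big) = 2\big(\sigma_k^{ij}u_{li}u_{lj} - k\,\sigma_k(D^2u)\big) \ge 0$. It is worth double-checking the sign conventions and the constant $k$ in $\sigma_k^{ij}u_{ij} = k\sigma_k$ (from the first displayed identity in Section 2.1), since an off-by-$k$ error would break the final step.
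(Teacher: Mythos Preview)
Your approach is correct and essentially the same as the paper's: after arriving at $\sigma_k^{ij}P_{ij}=2\big(\sigma_k^{ij}u_{li}u_{lj}-k\sigma_k\big)$, the paper invokes the standard identity $\sigma_k^{ij}u_{li}u_{lj}=\sigma_1\sigma_k-(k+1)\sigma_{k+1}$, bounds $(k+1)\sigma_{k+1}\le\frac{n-k}{n}\sigma_1\sigma_k$ via \eqref{NMieq} (the bound being trivial when $\sigma_{k+1}\le 0$), and concludes with MacLaurin \eqref{MacLaurin} to get $\frac{\sigma_1}{n}\ge \big(\frac{\sigma_k}{C_n^k}\big)^{1/k}=1$. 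Your worry about $\overline{\Gamma_k}$ versus $\Gamma_k$ is unnecessary, since Lemma \ref{ukconvex} actually places $D^2u$ in the open cone.
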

\begin{proof}
Direct computation leads that
\begin{equation}\label{ineq1}\begin{aligned}\sigma_k^{ij}(D^2u)P_{ij}=&2\sigma_k^{ij}(D^2u)\big(u_{li}u_{lj}+u_lu_{lij}-u_{ij})=2\sigma_k^{ij}(D^2u)u_{li}u_{lj}-2k\sigma_k(D^2u)\\=&2\big(\sigma_1(D^2 u)\sigma_k(D^2u)-(k+1)\sigma_{k+1}(D^2u)-k\sigma_k(D^2u)\big)\end{aligned}\end{equation}
If $\sigma_{k+1}(D^2u)> 0$, then $D^2u\in\Gamma_{k+1}$. By  \eqref{NMieq}, we have 
\begin{equation}\label{ls1}(k+1)\sigma_{k+1}(D^2u)\leq \frac{n-k}n\sigma_1(D^2u)\sigma_k(D^2u).\end{equation}
If $\sigma_{k+1}\leq 0$, \eqref{ls1} holds naturally. So by using \eqref{MacLaurin}, we get
\begin{equation}\label{ineq1-1}\sigma_k^{ij}(D^2u)P_{ij}\geq 2k\sigma_k(\frac{\sigma_1(D^2u)}{n}-\frac{\sigma_k(D^2u)}{C_n^k})\geq 0.\end{equation}
\end{proof}

The Rellich-Pohozaev type identity for Hessian equation has already been found \cite{Tso1990, Salani2008}. 
Brandolini, Nitsch, Salani and Trombetti \cite{Salani2008} gave the Rellich-Pohozaev type identity for $\sigma_k(D^2u)=f(u)$ in $\Omega$, with $u=0$ on $\partial\Omega$,
\begin{equation}\frac{n-2k}{k(k+1)}\int_\Omega\sigma_k^{ij}(D^2u)u_iu_j\mathrm dx+\frac1{k+1}\int_{\partial\Omega}x\cdot\gamma|Du|^{k+1}H_{k-1}\mathrm d\sigma=n\int_\Omega F(u)\mathrm dx,\end{equation} where $F(u)=\int_u^0f(s)\mathrm ds$.
In their proof, they need to deal with the fourth order term $\frac{\partial^2}{\partial x_i\partial x_l}\sigma_k^{ij}(D^2u)$, which may be difficult in space forms. To overcome it, we come up with a different Rellich-Pohozaev type identity.
\begin{lemma}\label{dlempohorn}
Let $\Omega\subset \mathbb R^n $ be a bounded $C^2$ domain. If $u\in C^2(\Omega)\cap C^1(\overline\Omega)$ is a solution to the problem
\begin{equation}\label{f3.6}\begin{cases}
    \sigma_k(D^2u)=C_n^k &\quad\text{in }\Omega,\\
    u=0 &\quad\text{on }\partial\Omega.
\end{cases}\end{equation}
Then 
\begin{equation}\label{PohoEuc}
\frac12\int_{\partial\Omega}\sigma_k^{ij}(D^2u)x_j\gamma_i|Du|^2\mathrm d\sigma=-kC_n^k\int_\Omega u\mathrm dx+\frac{n-k+1}2\int_\Omega\sigma_{k-1}(D^2u)|Du|^2\mathrm dx.\end{equation}
\end{lemma}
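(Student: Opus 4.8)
The plan is to establish the identity \eqref{PohoEuc} by integrating the divergence of a suitably chosen vector field over $\Omega$ and applying the divergence theorem. The natural candidate, motivated by the structure of the right-hand side, is the vector field whose $i$-th component is
\begin{equation}
W_i = \tfrac12\,\sigma_k^{ij}(D^2u)\,x_j\,|Du|^2.
\end{equation}
Since $\partial_i W_i = \tfrac12\,\partial_i\big(\sigma_k^{ij}(D^2u)\big)x_j|Du|^2 + \tfrac12\,\sigma_k^{ij}(D^2u)\delta_{ij}|Du|^2 + \sigma_k^{ij}(D^2u)x_j u_l u_{li}$, the first term vanishes by the divergence-free property \eqref{f2.10}, and the second simplifies via $\sum_i\sigma_k^{ii} = (n-k+1)\sigma_{k-1}(D^2u)$. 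So first I would compute $\partial_i W_i$ and reduce it to
\begin{equation}
\partial_i W_i = \tfrac{n-k+1}{2}\,\sigma_{k-1}(D^2u)\,|Du|^2 + \sigma_k^{ij}(D^2u)\,x_j\,u_l\,u_{li}.
\end{equation}

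The second step is to handle the term $\sigma_k^{ij}(D^2u)x_j u_l u_{li}$. Here I would use the symmetry identity \eqref{f2.11}, namely $\sigma_k^{ij}(D^2u)u_{li} = \sigma_k^{il}(D^2u)u_{ij}$, to rewrite it as $\sigma_k^{il}(D^2u)u_{ij}x_j u_l$. The point is that $\sigma_k^{il}(D^2u)u_{ij}x_j$ should be recognized as (part of) a derivative: since $\sigma_k^{il}$ is divergence-free, $\partial_i\big(\sigma_k^{il}(D^2u)u_j x_j\big) = \sigma_k^{il}(D^2u)(u_{ij}x_j + u_j\delta_{ij}) = \sigma_k^{il}(D^2u)u_{ij}x_j + \sigma_k^{il}(D^2u)u_i$. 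Multiplying by $u_l$ and using $\sigma_k^{il}(D^2u)u_iu_l \ge 0$... actually more cleanly, I would instead write the whole cross term directly as a further divergence minus lower-order pieces, or integrate by parts once more, at which point the equation $\sigma_k(D^2u)=C_n^k$ and the identity $\sigma_k(D^2u) = \tfrac1k\sigma_k^{ij}(D^2u)u_{ij}$ produce the term $-kC_n^k\int_\Omega u$. Concretely, one expects an auxiliary vector field like $V_i = \sigma_k^{ij}(D^2u)u_j u$ or $\sigma_k^{ij}(D^2u)u_j x_l u_l$ whose divergence captures $\sigma_k^{il}u_{ij}x_j u_l$ together with the desired $k C_n^k u$ term and a boundary contribution that vanishes because $u=0$ on $\partial\Omega$.

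The third step is to evaluate the boundary integral $\int_{\partial\Omega}W_i\gamma_i\,\mathrm d\sigma = \tfrac12\int_{\partial\Omega}\sigma_k^{ij}(D^2u)x_j\gamma_i|Du|^2\,\mathrm d\sigma$, which is already exactly the left-hand side of \eqref{PohoEuc}, and to check that all auxiliary vector fields introduced in the second step have boundary flux zero — this follows from $u\equiv 0$ on $\partial\Omega$ wherever a factor of $u$ survives, so those terms drop and only the $W$-flux remains. Assembling the divergence theorem applied to $W_i$ minus the auxiliary fields then gives \eqref{PohoEuc}.

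The main obstacle I anticipate is the bookkeeping in the second step: correctly choosing the auxiliary vector field so that its divergence reproduces precisely $\sigma_k^{il}(D^2u)u_{ij}x_j u_l$ plus a clean multiple of $C_n^k u$ and nothing else, using only the divergence-free property \eqref{f2.10}, the index-swap identity \eqref{f2.11}, and $\sigma_k(D^2u)=\tfrac1k\sigma_k^{ij}u_{ij}$. One has to be careful that no genuinely fourth-order term $\partial_i\partial_l\sigma_k^{ij}$ reappears — the whole design of this identity (as the authors emphasize) is to avoid it — so every integration by parts must be arranged to differentiate $\sigma_k^{ij}$ at most once and then kill it via \eqref{f2.10}. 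Once the vector fields are pinned down the rest is routine.
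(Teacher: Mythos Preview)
Your overall strategy is correct and is essentially the paper's proof run in reverse order: the paper first multiplies the equation by $u$ and integrates by parts to reach \eqref{f3.13}, and only then performs the integration by parts corresponding to your vector field $W$ (their \eqref{f3.14}); you start from $W$ and must then recover \eqref{f3.13}. The ingredients --- the divergence--free identity \eqref{f2.10}, the index swap \eqref{f2.11}, and $u=0$ on $\partial\Omega$ --- are exactly the ones used.

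The one concrete gap is in your second step: neither of your candidate auxiliary fields works. The field $V_i=\sigma_k^{ij}u_j u$ has divergence $kC_n^k\,u+\sigma_k^{ij}u_iu_j$, which contains no copy of the cross term $\sigma_k^{ij}u_j u_{il}x_l$; the field $V_i=\sigma_k^{ij}u_j(x\cdot Du)$ does produce the cross term, but also produces $kC_n^k(x\cdot Du)+\sigma_k^{ij}u_iu_j$ and, more seriously, its boundary flux $\int_{\partial\Omega}u_\gamma^2(x\cdot\gamma)\,\sigma_k^{ij}\gamma_i\gamma_j\,\mathrm d\sigma$ does \emph{not} vanish under the sole hypothesis $u=0$. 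The correct auxiliary field is the one the paper uses implicitly in \eqref{f3.9},
\[
Z_j \;=\; \sigma_k^{ij}(D^2u)\,u_{il}\,u\,x_l,
\]
whose divergence equals $\sigma_k^{ij}u_{il}u_j x_l + kC_n^k\,u + \sigma_k^{ij}u_{ilj}\,u\,x_l$. The last summand is a third-order term, but it is killed not by \eqref{f2.10} alone --- it is $\sigma_k^{ij}u_{ijl}$ that vanishes, obtained by differentiating the equation $\sigma_k(D^2u)=C_n^k$ (this is the paper's \eqref{f3.10}). The boundary flux of $Z$ vanishes because of the explicit factor $u$. With $Z$ in hand, your scheme closes exactly: $\int_\Omega \partial_j Z_j\,\mathrm dx=0$ gives $\int_\Omega \sigma_k^{ij}x_j u_l u_{li}\,\mathrm dx=-kC_n^k\int_\Omega u\,\mathrm dx$, and combining with your computation of $\partial_i W_i$ yields \eqref{PohoEuc}.
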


\begin{proof}
Multiplying the equation \eqref{f3.6} by $u$, we obtain
\begin{equation}\label{f3.8}
    kC_n^ku=\sigma_k^{ij}(D^2u)u_{ij}u=\frac12\sigma_k^{ij}(D^2u)u_{il}u(|x|^2)_{lj}.
\end{equation}
By \eqref{f2.10}, we have
\begin{equation}\label{f3.9}
    \sigma_k^{ij}(D^2u)u_{il}u(|x|^2)_{lj}=(\sigma_k^{ij}(D^2u)u_{il}u(|x|^2)_l)_j-\sigma_k^{ij}(D^2u)u_{ilj}u(|x|^2)_l-\sigma_k^{ij}(D^2u)u_{il}u_j(|x|^2)_l.
\end{equation}
It follows from \eqref{f3.6} that
\begin{equation}\label{f3.10}
    \sigma_k^{ij}(D^2u)u_{ijl}=0.
\end{equation}
By \eqref{f2.11}, we have
\begin{equation}\label{f3.11}
    \sigma_k^{ij}(D^2u)u_{il}u_j(|x|^2)_l=2\sigma_k^{il}(D^2u)u_{ij}u_jx_l=\sigma_k^{ij}(D^2u)(|Du|^2)_ix_j.
\end{equation}
Putting \eqref{f3.8}, \eqref{f3.9}, \eqref{f3.10} and \eqref{f3.11} together, and integrating it on $\Omega$, we find
\begin{equation}
    kC_n^k\int_\Omega u\mathrm dx=\frac12\int_\Omega (\sigma_k^{ij}(D^2u)u_{il}u(|x|^2)_l)_j\mathrm dx-\frac12\int_\Omega \sigma_k^{ij}(D^2u)(|Du|^2)_ix_j\mathrm dx.
\end{equation}
It follows from divergence theorem and $u=0$ on $\partial\Omega$ that
\begin{equation}\label{f3.13}
    kC_n^k\int_\Omega u\mathrm dx=-\frac12\int_\Omega \sigma_k^{ij}(D^2u)(|Du|^2)_ix_j\mathrm dx.
\end{equation}
Using \eqref{f2.10} again, we get
\begin{equation}\label{f3.14}
    \sigma_k^{ij}(D^2u)(|Du|^2)_ix_j=(\sigma_k^{ij}(D^2u)|Du|^2x_j)_i-(n-k+1)\sigma_{k-1}(D^2u)|Du|^2.
\end{equation}
Substituting \eqref{f3.14} into \eqref{f3.13} and using divergence theorem again, we finally get \eqref{PohoEuc}.
\end{proof}
The following lemma help us to deal with the term of boundary integral in \eqref{PohoEuc}.
\begin{lemma}\label{dlemma3.3}
Let $\Omega\subset \mathbb R^n$ be a $C^2$ bounded domain, $u\in C^2(\overline\Omega)$ be a solution to the equation \eqref{Eucpb}, then 
\begin{equation}\label{formulaforlhs}
   \sigma_k^{ij}(D^2u)x_j\gamma_i|Du|^2 =\sigma_k^{ij}(D^2u)u_iu_jx\cdot\gamma\quad\text{on }\partial\Omega.
\end{equation}
\end{lemma}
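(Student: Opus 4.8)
The plan is to exploit the fact that on $\partial\Omega$ the gradient $Du$ is a positive multiple of the unit outer normal $\gamma$ and, more importantly, that $\gamma$ is an eigendirection of the Hessian $D^2u$ along $\partial\Omega$; the identity \eqref{formulaforlhs} will then follow by splitting the position vector $x$ into its normal and tangential components.

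First I would record the boundary data. Since $u\equiv 0$ on $\partial\Omega$, the tangential gradient of $u$ along $\partial\Omega$ vanishes, so $Du=u_\gamma\gamma=c_0\gamma$ on $\partial\Omega$; in particular $u_i=c_0\gamma_i$ and $|Du|^2=c_0^2$ there. Substituting this into \eqref{formulaforlhs}, a common factor $c_0^2$ drops out and the statement reduces to
\[
\sigma_k^{ij}(D^2u)\,\gamma_i x_j=\sigma_k^{ij}(D^2u)\,\gamma_i\gamma_j\,(x\cdot\gamma)\qquad\text{on }\partial\Omega,
\]
that is, to showing that the covector $w_j:=\sigma_k^{ij}(D^2u)\gamma_i$ annihilates the tangential part $x-(x\cdot\gamma)\gamma$ of $x$, which holds once we know that $w$ is parallel to $\gamma$.

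The main step is therefore to prove that $(\gamma_i)$ is an eigenvector of the symmetric matrix $(u_{ij})$ on $\partial\Omega$. For any vector $\tau$ tangent to $\partial\Omega$, differentiating the relation $Du=c_0\gamma$ along $\partial\Omega$ in the direction $\tau$ gives $u_{ij}\tau_i=c_0(D_\tau\gamma)_j$ on $\partial\Omega$; since $D_\tau\gamma$ is tangent to $\partial\Omega$ (it is the Weingarten map applied to $\tau$), contracting with $\gamma_j$ yields $u_{ij}\tau_i\gamma_j=c_0\,(D_\tau\gamma)\cdot\gamma=\tfrac{c_0}{2}D_\tau(|\gamma|^2)=0$. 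Hence $\sum_j u_{ij}\gamma_j$ has no tangential component, so $\sum_j u_{ij}\gamma_j=\mu\gamma_i$ for some function $\mu$ on $\partial\Omega$. Recalling that $\sigma_k^{ij}(D^2u)$, viewed as a matrix, is a polynomial in $D^2u$ (apply Proposition \ref{keyprop} iteratively), it shares the eigenvectors of $D^2u$, and therefore $w_j=\sigma_k^{ij}(D^2u)\gamma_i=\nu\gamma_j$ on $\partial\Omega$ for some function $\nu$.

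It then remains only to combine these facts: $\sigma_k^{ij}(D^2u)\gamma_i x_j=w_jx_j=\nu(x\cdot\gamma)$ and $\sigma_k^{ij}(D^2u)\gamma_i\gamma_j(x\cdot\gamma)=w_j\gamma_j(x\cdot\gamma)=\nu(x\cdot\gamma)$ coincide, and multiplying back by $c_0^2=|Du|^2$ together with $u_i=c_0\gamma_i$ gives \eqref{formulaforlhs}. I expect the eigenvector claim for $D^2u$ to be the only point requiring care; since the argument uses nothing beyond $u=0$ and $u_\gamma=c_0$ on $\partial\Omega$, the same computation, with the Levi-Civita connection of $\mathbb H^n$ in place of $D$, is exactly what will be needed to handle the corresponding boundary term for problem \eqref{mainpb} in the last section.
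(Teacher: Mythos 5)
Your proof is correct and rests on the same key fact as the paper's: the boundary conditions $u=0$, $u_\gamma=c_0$ force $u_{ij}\gamma_i=u_{\gamma\gamma}\gamma_j$ on $\partial\Omega$, i.e.\ $\gamma$ is an eigenvector of $D^2u$ there, after which one only needs to transfer this to $\sigma_k^{ij}(D^2u)$. The paper does that transfer by an explicit induction on $k$ via Proposition \ref{keyprop}, while you invoke the equivalent fact (itself obtained by iterating Proposition \ref{keyprop}) that the Newton tensor $\sigma_k^{ij}(D^2u)$ is a polynomial in $D^2u$ and hence shares its eigenvectors; the two arguments are essentially the same.
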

\begin{proof}
By the boundary conditions of problem \eqref{Eucpb}, we have on $\partial\Omega$ that
\begin{equation}\label{Duonpo}
    Du=u_\gamma\gamma,
\end{equation}
and 
\begin{equation}
    \begin{aligned}
    (u_\gamma)_j=(u_i\gamma_i)_j=u_{ij}\gamma_i+u_iD_j\gamma_i=u_{ij}\gamma_i+u_\gamma \gamma_iD_j\gamma_i=u_{ij}\gamma_i.
    \end{aligned}
\end{equation}
Then 
\begin{equation}
    \begin{aligned}
    (u_\gamma)_\gamma=u_{ij}\gamma_i\gamma_j:=u_{\gamma\gamma}.
    \end{aligned}
\end{equation}
Since $u_\gamma=c_0$ on $\partial\Omega$, for any tangential direction $\tau$, we have $(u_\gamma)_\tau=0$, then
\begin{equation}
    \begin{aligned}
    Du_\gamma=(u_\gamma)_\gamma\gamma=u_{ij}\gamma_i\gamma_j\gamma.
    \end{aligned}
\end{equation}
Therefore, 
\begin{equation}\label{3.10}
    \begin{aligned}
    u_{ij}\gamma_i=(u_\gamma)_j=u_{kl}\gamma_k\gamma_l\gamma_j=(u_{\gamma})_{\gamma}\gamma_j=u_{\gamma\gamma}\gamma_j.
    \end{aligned}
\end{equation}
It follows that 
\begin{equation}\label{3.11}u_{ij}u_i=u_{\gamma\gamma}u_j.\end{equation}
Now we derive \eqref{formulaforlhs} by induction. When $k=1$, \eqref{formulaforlhs} holds obviously since $\sigma_1^{ij}(D^2u)=\delta_{ij}$. Suppose \eqref{formulaforlhs} holds for $k-1$, then 
\begin{equation}\label{e3.15}\begin{aligned}\sigma_k^{ij}(D^2u)x_i\gamma_j|Du|^2=&(\sigma_{k-1}(D^2u)\delta_{ij}-\sigma_{k-1}^{js}(D^2u)u_{si})x_j\gamma_i|Du|^2\\
=&\sigma_{k-1}(D^2u)|Du|^2x\cdot\gamma-\sigma_{k-1}^{js}(D^2u)u_{\gamma\gamma}\gamma_sx_j|Du|^2\\
=&\sigma_{k-1}(D^2u)|Du|^2x\cdot\gamma-\sigma_{k-1}^{ij}(D^2u)u_iu_jx\cdot\gamma u_{\gamma\gamma}\\
=&(\sigma_{k-1}(D^2u)|Du|^2-\sigma_{k-1}^{ij}(D^2u)u_{il}u_lu_j)x\cdot\gamma\\
=&\sigma_{k}^{ij}(D^2u)u_iu_jx\cdot\gamma,
\end{aligned}\end{equation}
where we use \eqref{prop2.1} in the first and last equality, \eqref{3.10} in the second equality, and \eqref{3.11} in the fourth equality.
\end{proof}
Moreover, by formulas for the curvature of level sets and Minkowskian integral formulas, the boundary integral can be turned into an integral on $\Omega$.
\begin{lemma}\label{dlemma3.4}
Let $u\in C^3(\Omega)\cap C^2(\overline\Omega)$ be a solution to the problem \eqref{Eucpb}, then 
\begin{equation}\label{d3.13}
    \int_{\partial\Omega}\sigma_k^{ij}(D^2u)u_iu_jx\cdot\gamma\mathrm d\sigma=(n-k+1)c_0^2\int_{\Omega}\sigma_{k-1}(D^2u)\mathrm dx.
\end{equation}
\end{lemma}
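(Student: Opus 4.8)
The plan is to reduce both sides of \eqref{d3.13} to one and the same boundary integral, namely $\int_{\partial\Omega}\sigma_k^{ij}(D^2u)x_i\gamma_j\,\mathrm d\sigma$. On the volume side I would exploit the divergence-free structure \eqref{f2.10} of $(\sigma_k^{ij}(D^2u))$ together with the trace identity $\sum_{i}\sigma_k^{ii}(A)=(n-k+1)\sigma_{k-1}(A)$ recalled in Section 2; on the boundary side I would use Lemma \ref{dlemma3.3}, taking advantage of the fact that $|Du|$ is constant on $\partial\Omega$ under the overdetermined condition.

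\emph{Step 1 (volume side).} Since $D_j x_i=\delta_{ij}$ and $D_i\sigma_k^{ij}(D^2u)=0$, one gets $D_j\bigl(\sigma_k^{ij}(D^2u)\,x_i\bigr)=\sigma_k^{ij}(D^2u)\,\delta_{ij}=(n-k+1)\sigma_{k-1}(D^2u)$. Integrating over $\Omega$ and applying the divergence theorem yields
\begin{equation}\label{plan-volume}
(n-k+1)\int_{\Omega}\sigma_{k-1}(D^2u)\,\mathrm dx=\int_{\partial\Omega}\sigma_k^{ij}(D^2u)\,x_i\gamma_j\,\mathrm d\sigma .
\end{equation}

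\emph{Step 2 (boundary side).} On $\partial\Omega$ the boundary conditions in \eqref{Eucpb} force $Du=c_0\gamma$ (cf. \eqref{Duonpo}), hence $|Du|^2=c_0^2$ there. Plugging this into Lemma \ref{dlemma3.3} and using the symmetry of $\sigma_k^{ij}$ in its indices gives the pointwise identity $c_0^2\,\sigma_k^{ij}(D^2u)\,x_i\gamma_j=\sigma_k^{ij}(D^2u)\,u_iu_j\,(x\cdot\gamma)$ on $\partial\Omega$. Integrating this over $\partial\Omega$ and substituting \eqref{plan-volume} produces \eqref{d3.13} after dividing by $c_0^2>0$. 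As an alternative consistent with the discussion preceding the lemma, in Step 2 one may instead invoke the level-set curvature formula \eqref{curvoflvlset}, which on the level set $\partial\Omega=\{u=0\}$ reads $\sigma_k^{ij}(D^2u)u_iu_j=c_0^{k+1}H_{k-1}$, and then pass from $\int_{\partial\Omega}H_{k-1}\,(x\cdot\gamma)\,\mathrm d\sigma$ to the claimed volume integral via the Minkowskian formula \eqref{Minkowskionrn} (the case $k=1$ being trivial since $\sigma_1^{ij}=\delta_{ij}$); a short binomial simplification then recovers the constant $n-k+1$.

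I do not expect a genuine obstacle here, since the argument is a short bookkeeping chain; the only point needing care is the legitimacy of the divergence theorem in Step 1. Because $u\in C^3(\Omega)\cap C^2(\overline\Omega)$, the field $\sigma_k^{ij}(D^2u)x_i$ is $C^1$ in $\Omega$ and continuous up to $\partial\Omega$, and its divergence $(n-k+1)\sigma_{k-1}(D^2u)$ is bounded on $\overline\Omega$; one therefore applies the divergence theorem on the sublevel sets $\{u<-\varepsilon\}$, which are $C^2$ domains for almost every small $\varepsilon>0$ by Sard's theorem, and lets $\varepsilon\to0$.
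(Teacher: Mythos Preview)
Your main argument is correct but follows a genuinely different route from the paper. The paper proves the lemma by first rewriting $\sigma_k^{ij}(D^2u)u_iu_j$ on $\partial\Omega$ via the level-set curvature formula \eqref{curvoflvlset} as $H_{k-1}|Du|^{k+1}$, then applying the Minkowskian formula \eqref{Minkowskionrn} to trade $\int_{\partial\Omega}H_{k-1}\,x\cdot\gamma$ for $\int_{\partial\Omega}H_{k-2}$, reconverting $H_{k-2}$ back through \eqref{curvoflvlset}, and finally using the divergence theorem on the vector field $\sigma_{k-1}^{ij}(D^2u)u_i$ to land on $(k-1)\sigma_{k-1}(D^2u)$ in the interior. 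Your Step~1/Step~2 chain bypasses both the curvature formula and the Minkowski identity entirely: you apply the divergence theorem directly to $\sigma_k^{ij}(D^2u)x_i$ and then invoke Lemma~\ref{dlemma3.3}, which has already been established, to identify the resulting boundary integral with the left side of \eqref{d3.13}. This is shorter and more elementary, and it makes transparent that the lemma is really a one-line consequence of Lemma~\ref{dlemma3.3} plus the trace identity $\sum_i\sigma_k^{ii}=(n-k+1)\sigma_{k-1}$; the trade-off is that the paper's route exhibits the geometric content (curvatures of $\partial\Omega$ and the Minkowski relation), which is the template reused in the hyperbolic case. The alternative you sketch at the end of Step~2 is essentially the paper's approach, though note that the Minkowski formula by itself yields another boundary integral involving $H_{k-2}$, so one still needs a further divergence step to reach the volume integral. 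One cosmetic point: in your final sentence of Step~2 you should be \emph{multiplying} \eqref{plan-volume} by $c_0^2$, not dividing.
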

\begin{proof}
From \eqref{curvoflvlset}, 
\begin{equation}
    \sigma_k^{ij}u_iu_jx\cdot \gamma=H_{k-1}|Du|^{k+1}x\cdot \gamma\quad\text{on }\partial\Omega,
\end{equation}
Integrating it on $\partial\Omega$, and using Minkowskian integral formula \eqref{Minkowskionrn}, we obtain,
\begin{equation}\label{d3.15}
    \int_{\partial\Omega}\sigma_k^{ij}(D^2u)u_iu_jx\cdot \gamma\mathrm d\sigma=\int_{\partial\Omega}H_{k-1}|Du|^{k+1}x\cdot \gamma\mathrm d\sigma =\frac{n-k+1}{k-1}\int_{\partial\Omega}H_{k-2}|Du|^{k+1}\mathrm d\sigma.
\end{equation}
By \eqref{curvoflvlset} and \eqref{Duonpo}, we have
\begin{equation}
    H_{k-2}|Du|^{k+1}=\sigma_{k-1}^{ij}(D^2u)u_iu_j|Du|=c_0^2\sigma_{k-1}^{ij}(D^2u)u_i\gamma_j\quad\text{on }\partial\Omega.
\end{equation}
Applying the divergence theorem, we get
\begin{equation}\label{d3.17}
\begin{aligned}
\int_{\partial\Omega}H_{k-2}|Du|^{k+1}\mathrm  d\sigma=&c_0^2\int_{\Omega}(\sigma_{k-1}^{ij}(D^2u)u_i)_j\mathrm dx
\end{aligned}
\end{equation}
Since $\sigma_k^{ij}(D^2u)$ is divergence free, it follows that
\begin{equation}\label{d3.18}
    (\sigma_{k-1}^{ij}(D^2u)u_i)_j=\sigma_{k-1}^{ij}(D^2u)u_{ij}=(k-1)\sigma_{k-1}(D^2u).
\end{equation}
Putting \eqref{d3.15}, \eqref{d3.17} and \eqref{d3.18} together, we deduce \eqref{d3.13}.
\end{proof}

\begin{proof}[Proof of Theorem \ref{2thm}.]
By lemma \ref{dlempohorn}, lemma \ref{dlemma3.3} and lemma \ref{dlemma3.4}, we obtain
\begin{equation}\label{test2}\frac{n-k+1}2\int_{\Omega}\sigma_{k-1}(D^2 u)(|Du|^2-c_0^2) \mathrm dx =kC_n^k\int_\Omega u \mathrm dx .\end{equation}
By maximum principle and lemma \ref{P}, we have
\begin{equation}
    P=|Du|^2-2u\leq c_0^2\quad\text{in }\Omega,
\end{equation}
Substituting it into \eqref{test2}, we get
\begin{equation}\label{g3.32}
    kC_n^k\int_\Omega u\mathrm dx\leq (n-k+1)\int_\Omega \sigma_{k-1}(D^2u)u\mathrm dx.
\end{equation}
On the other hand, by MacLaurin inequalities \eqref{MacLaurin},  $\sigma_{k-1}(D^2u)\geq C_n^{k-1}(\frac{\sigma_k(D^2u)}{C_n^k})^\frac{k-1}k=C_n^{k-1}$ in $\Omega$. Since $u<0$ in $\Omega$, we have
\begin{equation}\label{g3.33}
    (n-k+1)\sigma_{k-1}(D^2u)u\leq kC_n^ku.
\end{equation}
It follows from \eqref{g3.32} and \eqref{g3.33} that
\begin{equation}
    \sigma_{k-1}(D^2u)=C_n^{k-1}\quad\text{in }\Omega.
\end{equation}
By MacLaurin inequalities \eqref{MacLaurin},  eigenvalues of $D^2u$ are all equal to 1. Then $u=\frac{|x-x_0|^2-c_0^2}2$ and $\Omega=B_{c_0}(x_0)$ for some $x_0$. Hence we complete the proof of theorem \ref{2thm}
\end{proof}

\section{Overdetermined problem in $\mathbb H^n$}
In this section, we present a Rellich-Pohozaev type identity for $\sigma_k(D^2u-uI)=C_n^k$ with zero Dirichlet boundary condition, and establish a differential inequality for a P function $P=|Du|^2-u^2-2u$. After a similar argument as in the third section, we give a proof of theorem \ref{mainthm}.

Paralleled with lemma \ref{ukconvex}, we prove the following lemma.

\begin{lemma}\label{dlemma4.5}
Let $\Omega\subset\mathbb H^n$ be a bounded $C^2$ domain and $u\in C^2(\overline\Omega)$ is a solution to the problem \eqref{mainpb}, then $u$ is $k$-admissibe in $\Omega$.
\end{lemma}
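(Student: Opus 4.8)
The goal is to show that the solution $u$ to \eqref{mainpb} is $k$-admissible, i.e.\ that $W:=D^2u-uI\in\Gamma_k$ everywhere in $\Omega$. The plan is to mimic the strategy used for Lemma \ref{ukconvex} in the Euclidean case (from \cite{Salani2008}), adapting it to the hyperbolic setting via the substitution that turns $S_k[u]=\sigma_k(D^2u-uI)$ into a Hessian-type operator. Concretely, I would first recall that by the maximum principle $u<0$ in $\Omega$ and $u=0$ on $\partial\Omega$, so that on $\partial\Omega$ the outward normal derivative $u_\gamma=c_0>0$; hence near $\partial\Omega$, $W=D^2u-uI$ is controlled by the (positive) second fundamental form of the level sets of $u$ together with the term $-uI>0$. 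The first step is therefore to check admissibility in a collar neighborhood of $\partial\Omega$: on $\partial\Omega$ itself, since $u=0$, the tangential part of $D^2u$ equals $u_\gamma$ times the second fundamental form of $\partial\Omega$, and one argues (as in the Euclidean case) that $\partial\Omega$ must be $k$-convex because $\sigma_k$ of the full Hessian is positive there; the term $-uI$ only helps since $-u\ge 0$.

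The second and main step is a connectedness/continuity argument on the set $\Gamma:=\{x\in\Omega: W(x)\in\Gamma_k\}$. This set is open, nonempty (it contains the collar), and I want to show it is closed in $\Omega$. Suppose not: then there is a point $x_0\in\Omega$ on $\partial\Gamma\cap\Omega$ where $W(x_0)\in\partial\Gamma_k$, i.e.\ $\sigma_j(W(x_0))>0$ for $j<k$ but $\sigma_k(W(x_0))=0$ — which contradicts $\sigma_k(W)=C_n^k>0$ directly. The only subtlety is that "$W\in\Gamma_k$" requires $\sigma_1(W),\dots,\sigma_{k-1}(W)$ all positive too, so one must rule out the boundary of $\Gamma_k$ where some lower $\sigma_j$ vanishes while $\sigma_k$ stays positive. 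Here one invokes the structure of the Gårding cone: $\Gamma_k$ is the connected component of $\{\sigma_k>0\}$ containing the positive cone, and its boundary (within $\{\sigma_k\ne 0\}$) forces $\sigma_j=0$ for some $j\le k$ with $\sigma_k\le 0$ by Gårding's theory (or by the Newton/Maclaurin inequalities \eqref{Newton}, \eqref{MacLaurin}). Since $\sigma_k(W)=C_n^k>0$ throughout $\Omega$ and the level-set argument gives $H_j>0$ on $\partial\Omega$ for $j\le k$, continuity of the eigenvalues of $W$ along $\overline\Omega$ then forces $W\in\Gamma_k$ everywhere.

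An alternative, cleaner route — and the one I would actually write — is the change of variables $v=u+V$ where $V=\cosh r$ is the function from the Minkowskian formula subsection satisfying $D^2V=VI$; then $D^2v-vI = D^2u-uI = W$, but this does not immediately help. A genuinely useful reduction is to pass to the ball model or to use the conformal factor: in the hyperbolic metric one can write $D^2u-uI$ in terms of a Euclidean Hessian of an associated function after multiplying by an appropriate factor, reducing admissibility of $u$ to $k$-convexity of that function and then applying Lemma \ref{ukconvex} verbatim. I expect the main obstacle to be making this reduction precise — in particular, verifying that the zero Dirichlet condition and the positive Neumann condition transform correctly under the change of variables so that the hypotheses of the Euclidean lemma are met, and that the operator $\sigma_k(D^2u-uI)$ really does become $\sigma_k$ of a Euclidean Hessian (possibly of a power or exponential of $u$) with a positive right-hand side. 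If that reduction proves awkward, I would fall back on the self-contained continuity argument of the previous paragraph, which only uses the sign of $u$, the boundary conditions, the curvature-of-level-sets formula \eqref{curvoflvlset}, and elementary properties of $\Gamma_k$.
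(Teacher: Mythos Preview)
Your connectedness framework is correct and matches the paper's final step: since $\Gamma_k$ is a connected component of $\{\sigma_k>0\}$ and $\sigma_k(W)=C_n^k>0$ throughout the connected set $\overline\Omega$, once $W(x_0)\in\Gamma_k$ at a single point you get $W\in\Gamma_k$ everywhere. The genuine gap is in your base case. You assert that on $\partial\Omega$ ``$\partial\Omega$ must be $k$-convex because $\sigma_k$ of the full Hessian is positive there,'' and use this to populate a collar. That does not follow: at an arbitrary boundary point, in a principal frame (with $u=0$) the equation reads $u_{\gamma\gamma}\,c_0^{k-1}H_{k-1}+c_0^{k}H_k=C_n^k>0$, which by itself says nothing about the signs of the lower $H_j$ (for $k=1$ there is no curvature constraint at all). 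So you have not yet shown $\Gamma$ is nonempty. Your alternative conformal/change-of-variables route is, as you suspect, not straightforward and is not what the paper does.

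The paper supplies precisely the missing ingredient, taken verbatim from \cite{Salani2008}: pick a single point $x_0\in\partial\Omega$ at which \emph{all} principal curvatures satisfy $\kappa_i\ge 0$ --- such a point always exists for a bounded $C^2$ domain (e.g.\ where an enclosing sphere touches $\partial\Omega$). At $x_0$ one has $H_j\ge 0$ for every $j$, and in fact $H_{k-1}(x_0)>0$ (else $H_k(x_0)=0$ too, contradicting the equation). Now Newton's inequality \eqref{Newton} lets you cascade down: from $S_k[u](x_0)>0$ you extract $u_{\gamma\gamma}>-c_0 H_k/H_{k-1}$, substitute into $S_{k-1}[u](x_0)=u_{\gamma\gamma}c_0^{k-2}H_{k-2}+c_0^{k-1}H_{k-1}$, and obtain $S_{k-1}[u](x_0)\ge c_0^{k-1}\bigl(H_{k-1}-H_{k-2}H_k/H_{k-1}\bigr)\ge 0$ (strictly, in fact); iterating gives $S_j[u](x_0)>0$ for all $1\le j\le k$. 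This places $W(x_0)\in\Gamma_k$, after which your connectedness argument finishes the proof.
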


\begin{proof}
The proof is almost the same as the proof in \cite{Salani2008}, where they prove that $u$ is a k-convex function if it is a solution to the problem \eqref{Eucpb}. From the boundedness and the smoothness of $\Omega$, there exists a point $x_0\in \partial\Omega$, the principal curvatures $\kappa_1,\cdots,\kappa_{n-1}$ of $\partial\Omega$ at $x_0$ are nonnegative. By choosing a suitable coordinate system centered at $x_0$, the first $n-1$ axes lay in the principal directions of curvature and the last one points in the direction of outer normal $\gamma_{x_0}$ of $\partial\Omega$ at $x_0$. Then $D^2u$ has the following form:
\begin{equation}D^2u=\begin{aligned}\begin{pmatrix}c_0\kappa_1&\cdots &0&u_{1n}\\\vdots&\ddots&\vdots&\vdots\\ 0&\cdots&c_0\kappa_{n-1}&u_{n-1\, n}\\u_{n1}&\cdots&u_{n\, n-1}&u_{nn} \end{pmatrix}\end{aligned}\end{equation}
Using the boundary condition in equation \eqref{mainpb}, a straightforward calculation shows that $D^2u(x_0)$ is in fact in diagonal form. Hence,
\begin{equation}0<S_k[u](x_0)=u_{nn}(x_0)c_0^{k-1}H_{k-1}(x_0)+c_0^k H_k(x_0),\end{equation}
Then from Newton inequalities \eqref{Newton}, we deduce that
\begin{equation}\begin{aligned}S_{k-1}[u](x_0)=&u_{nn}(x_0)c_0^{k-2}H_{k-2}(x_0)+c_0^{k-1}H_{k-1}(x_0)\\>&-c_0^{k-1}\frac{H_{k-2}(x_0)H_{k}(x_0)}{H_{k-1}(x_0)}+c_0^{k-1}H_{k-1}(x_0)\geq 0.\end{aligned}\end{equation}
Repeat the process untill we get $S_j[u](x_0)>0$ for all $1\leq j\leq k$. Hence $D^2u(x_0)-u(x_0)I\in \Gamma_k$. From the smoothness of $u$, $u$ is $k$-admissible in $\Omega$.
\end{proof}

 Distinct from the Euclidean case, the extra term $-uI$ in equation \eqref{mainpb} prompts us to consider the $P$ function $P=|Du|^2-u^2-2u$, as  in \cite{Ciraolo2019,QX2017}.
\begin{lemma}\label{PP}
Let $u\in C^3(\Omega)$ be an admissible solution of $\sigma_k(D^2u-uI)=C_n^k$ in $\Omega\subset\mathbb H^n$. Then the following P function 
\begin{equation}\label{eq:pp}\widetilde P:=|Du|^2-u^2-2u\end{equation} satisfies 
\begin{equation}\sigma_k^{ij}(D^2u-uI)\widetilde P_{ij}\geq 0.\end{equation}
\end{lemma}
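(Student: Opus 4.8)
The plan is to imitate the Euclidean computation in Lemma \ref{P}, carefully tracking the curvature terms produced by the Hessian on $\mathbb H^n$. Writing $W:=D^2u-uI$ with entries $w_{ij}=u_{ij}-u\delta_{ij}$, I would first compute $\widetilde P_{ij}$ directly: since $\widetilde P=|Du|^2-u^2-2u$, we have $\widetilde P_i=2u_lu_{li}-2uu_i-2u_i$ and hence $\widetilde P_{ij}=2u_{li}u_{lj}+2u_lu_{lij}-2u_iu_j-2uu_{ij}-2u_{ij}$. Contracting against $\sigma_k^{ij}(W)$ and using $\sigma_k^{ij}(W)w_{ij}=k\sigma_k(W)=kC_n^k$, the terms $-2uu_{ij}-2u_{ij}=-2(u+1)(w_{ij}+u\delta_{ij})$ and $-2u_iu_j$ combine with part of the first group; the key is that the third-derivative term $\sigma_k^{ij}(W)u_lu_{lij}$ must be handled via the commutation formula on $\mathbb H^n$, namely $u_{lij}=u_{ijl}-u_l\delta_{ij}+u_j\delta_{il}$ (already used in the proof of Proposition \ref{propdivfree}). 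Differentiating the equation $\sigma_k(W)=C_n^k$ gives $\sigma_k^{ij}(W)(u_{ijl}-u_l\delta_{ij})=0$, i.e. $\sigma_k^{ij}(W)u_{ijl}=(n-k+1)\sigma_{k-1}(W)u_l$; substituting the commutation identity then turns $\sigma_k^{ij}(W)u_lu_{lij}$ into an expression involving $\sigma_{k-1}(W)|Du|^2$, $\sigma_k^{ij}(W)u_iu_j$, and lower terms.

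After collecting everything, the second-order quadratic term $\sigma_k^{ij}(W)u_{li}u_{lj}$ should be rewritten using the standard identity $\sigma_k^{ij}(A)a_{il}a_{lj}=\sigma_1(A)\sigma_k(A)-(k+1)\sigma_{k+1}(A)$ applied to $A=W$ after first replacing $u_{li}=w_{li}+u\delta_{li}$, which produces $\sigma_k^{ij}(W)w_{li}w_{lj}+2u\,k\sigma_k(W)+u^2(n-k+1)\sigma_{k-1}(W)$. The hoped-for outcome, in analogy with \eqref{ineq1}, is an expression of the shape
\begin{equation}\label{targetPP}
\sigma_k^{ij}(D^2u-uI)\widetilde P_{ij}=2\big(\sigma_1(W)\sigma_k(W)-(k+1)\sigma_{k+1}(W)\big)+(\text{terms that cancel}),
\end{equation}
where all the $u$-dependent algebra — the $-u^2-2u$ corrections and the curvature terms $-u_l\delta_{ij}$ — is arranged to cancel precisely against the contributions of $-2uu_{ij}-2u_{ij}-2u_iu_j$ and the $u$-pieces of the quadratic and third-derivative terms. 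Once \eqref{targetPP} is established with the remainder vanishing, the conclusion follows exactly as in Lemma \ref{P}: since $u$ is $k$-admissible (Lemma \ref{dlemma4.5}), $W\in\Gamma_k$, so by the Newton--MacLaurin inequality \eqref{NMieq} we get $(k+1)\sigma_{k+1}(W)\le \frac{n-k}{n}\sigma_1(W)\sigma_k(W)$ (trivially true if $\sigma_{k+1}(W)\le0$), whence $\sigma_k^{ij}(W)\widetilde P_{ij}\ge 2k\sigma_k(W)\big(\frac{\sigma_1(W)}{n}-\frac{\sigma_k(W)}{C_n^k}\big)\ge0$ by \eqref{MacLaurin}.

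The main obstacle will be the bookkeeping in the cancellation step: on $\mathbb H^n$ the commutation term $-u_l\delta_{ij}+u_j\delta_{il}$ injects both a $\sigma_{k-1}(W)|Du|^2$ contribution and a $\sigma_k^{ij}(W)u_iu_j$ contribution, and one must verify that these, together with the $u^2$- and $u$-weighted pieces coming from rewriting $u_{ij}=w_{ij}+u\delta_{ij}$ everywhere, sum to exactly zero rather than to some residual term of indefinite sign — this is precisely why the function $\widetilde P=|Du|^2-u^2-2u$ (and not, say, $|Du|^2-2u$) is the correct choice, and checking that the coefficients match is the crux. I expect the identities $\sigma_k^{ij}(W)w_{ij}=kC_n^k$, $\sum_i\sigma_k^{ii}(W)=(n-k+1)\sigma_{k-1}(W)$, and the change-of-index formula \eqref{changeindex} applied to $W$ (giving $\sigma_k^{ij}(W)u_{li}u_j=\sigma_k^{ij}(W)u_iu_j$-type symmetrizations) to be the tools that force the cancellation; with those in hand the computation is routine, if lengthy.
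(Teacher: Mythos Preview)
Your overall strategy---rewrite everything in terms of $W=D^2u-uI$ and then apply the Newton--MacLaurin chain as in Lemma~\ref{P}---is exactly what the paper does. The gap is in your cancellation claim: the residual does \emph{not} vanish. Carrying out the computation you outline (equivalently, the paper's grouping) yields
\[
\tfrac12\,\sigma_k^{ij}(W)\widetilde P_{ij}
=\bigl(\sigma_1(W)\sigma_k(W)-(k+1)\sigma_{k+1}(W)-k\sigma_k(W)\bigr)\;-\;u\bigl((n-k+1)\sigma_{k-1}(W)-k\sigma_k(W)\bigr).
\]
The first bracket is handled exactly as in Lemma~\ref{P}, but the second bracket is a genuine leftover: it arises from the $-u\delta_{ij}$ piece produced when you write $-u_{ij}=-w_{ij}-u\delta_{ij}$ (contracted against $\sigma_k^{ij}(W)$ this gives $-u(n-k+1)\sigma_{k-1}(W)$), combined with the $+u\,k\sigma_k(W)$ coming from the $u\,w_{ij}$ term. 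No amount of reshuffling makes it disappear; the choice $\widetilde P=|Du|^2-u^2-2u$ kills the $u_iu_j$ and $u^2\delta_{ij}$ contributions, not this one.

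The fix is not algebraic cancellation but a sign argument that your proposal never invokes: since $W\in\Gamma_k$, the MacLaurin inequality \eqref{MacLaurin} gives $(n-k+1)\sigma_{k-1}(W)\ge k\sigma_k(W)$, and since $u<0$ in $\Omega$ (by the maximum principle, using the zero Dirichlet data from \eqref{mainpb}), the entire second term is nonnegative. This is precisely how the paper closes the argument. So your plan will work, but only after you abandon the hope of exact cancellation and recognize that the residual has a definite sign---and for that you need $u<0$, an ingredient absent from your outline.
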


\begin{proof}
We may choose a suitable coordinate such that $g_{ij}=\delta_{ij}$ and $g_{ij,k}=0$ at $p\in \Omega$. The following computation is done at $p$.
Since\begin{equation}\begin{aligned}\frac12(|Du|^2)_{ij}=&u_{li}u_{lj}+u_lu_{lij}=u_{li}u_{lj}+u_lu_{ijl}-u_lu_p(\delta_{pl}\delta_{ij}-\delta_{pj}\delta_{il})\\
=&(u_{li}-u\delta_{li})(u_{lj}-u\delta_{lj})+2u(u_{ij}-u\delta_{ij})+u^2\delta_{ij}+u_l(u_{ij}-u\delta_{ij})_{l}+u_iu_j,\end{aligned}\end{equation}
and
\begin{equation}\begin{aligned}\frac12(u^2)_{ij}=&uu_{ij}+u_iu_j=u(u_{ij}-u\delta_{ij})+u^2\delta_{ij}+u_iu_j.\end{aligned}\end{equation}
We have
\begin{equation}\label{ineq2}\begin{aligned}\frac12\sigma_k^{ij}(D^2u-uI)\widetilde P_{ij}=&\sigma_k^{ij}(D^2u-uI)((u_{li}-u\delta_{li})(u_{lj}-u\delta_{lj})+u(u_{ij}-u\delta_{ij})\\&+u_l(u_{ij}-u\delta_{ij})_{l}-(u_{ij}-u\delta_{ij})-u\delta_{ij})\\
=&(S_1[u]S_k[u]-(k+1)S_{k+1}[u]-kS_k[u])\\&-u(-kS_k[u]+(n-k+1)S_{k-1}[u])\\\geq &0,\end{aligned}\end{equation}
where the first term in the third line is dealt as in lemma \ref{P} and the second term can be handle with by MacLaurin inequalities \eqref{MacLaurin}, since $u$ is negative in $\Omega$.
\end{proof}
Now we establish the following Rellich-Pohozaev type identity.
\begin{lemma}\label{dlempohoonhn}
Let $\Omega\subset\mathbb H^n$ be a bounded $C^2$ domain. Let $u\in C^3(\Omega)\cap C^2(\overline\Omega)$ be a solution to the problem  
\begin{equation}\begin{cases}\label{eq4.9}
\sigma_k(D^2u-uI)=C_n^k&\quad\text{in }\Omega,\\
u=0&\quad\text{on }\partial\Omega.
\end{cases}\end{equation}
Then 
\begin{equation}\label{pohozaev}
    \begin{aligned}
    \frac12\int_{\partial\Omega}\sigma_k^{ij}(D^2u-uI) V_i\gamma_j|Du|^2\mathrm d\sigma
    =&-\frac{n-k+1}2\int_\Omega \sigma_{k-1}(D^2u-uI)u^2V\mathrm dx-kC_n^k\int_\Omega uV\mathrm dx\\
    &+\frac{n-k+1}2\int_\Omega \sigma_{k-1}(D^2u-uI)|Du|^2V\mathrm dx.
    \end{aligned}
\end{equation}
\end{lemma}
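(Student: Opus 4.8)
The plan is to mimic the Euclidean argument of Lemma \ref{dlempohorn}, but with the function $V=\cosh r$ (measured from a fixed point $p\in\Omega$) playing the role that $\tfrac12|x|^2$ played in $\mathbb R^n$. The crucial structural facts we will use are that $V$ satisfies $D^2V=VI$ (equivalently $V_{ij}=V\delta_{ij}$ in an orthonormal frame) and hence $\Delta V=nV$, together with the divergence-free property $D_i\sigma_k^{ij}(D^2u-uI)=0$ from Proposition \ref{propdivfree} and the symmetry identity \eqref{changeindex}. First I would multiply the equation $\sigma_k(D^2u-uI)=C_n^k$ by the weight $uV$ and use $\sigma_k(A)=\tfrac1k\sigma_k^{ij}(A)a_{ij}$ to write
\[
kC_n^k\,uV=\sigma_k^{ij}(D^2u-uI)(u_{ij}-u\delta_{ij})\,uV.
\]
The idea is then to recognize $u(V_{ij}-V\delta_{ij})$... no: rather, since $V_{ij}=V\delta_{ij}$, one checks $u_{ij}-u\delta_{ij}$ contracted against $\sigma_k^{ij}$ can be massaged, via $(u_{il}-u\delta_{il})V_{lj}=V(u_{ij}-u\delta_{ij})$ and the symmetry \eqref{changeindex}, into a divergence plus lower-order terms, exactly as $u_{ij}=\tfrac12 u_{il}(|x|^2)_{lj}$ was used in \eqref{f3.8}. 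Concretely I expect the replacement $(|x|^2)_l \rightsquigarrow V_l$ and $(|x|^2)_{lj}\rightsquigarrow V_{lj}=V\delta_{lj}$ throughout \eqref{f3.8}--\eqref{f3.11}.

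The key steps, in order: (1) Write $kC_n^k uV=\sigma_k^{ij}(D^2u-uI)(u_{il}-u\delta_{il})\,u\,(\text{something})_{lj}$ where the ``something'' is chosen so that its covariant Hessian is $V\delta_{lj}$ up to curvature terms — i.e. use $V_{lj}=V\delta_{lj}$ directly, so the bracket is $u(u_{il}-u\delta_{il})V_{lj}/V\cdot\ldots$; more cleanly, contract the equation against $u V$ and integrate by parts moving one derivative off $\sigma_k^{ij}(u_{il}-u\delta_{il})$ using Proposition \ref{propdivfree}. (2) Discard the third-order term $\sigma_k^{ij}(D^2u-uI)(u_{il}-u\delta_{il})_j$-type contractions: here one must use the Ricci identity on $\mathbb H^n$, $u_{ilj}-u_{ijl}=-u_l\delta_{ij}+u_j\delta_{il}$ (constant curvature $-1$), to show the relevant symmetrized third-order combination vanishes, just as \eqref{f3.10} did — this replaces the ``fourth order term'' difficulty the authors mention by a manageable third-order one. (3) Use \eqref{changeindex} to convert $\sigma_k^{ij}(D^2u-uI)(u_{il}-u\delta_{il})u_j V_l$ into $\tfrac12\sigma_k^{ij}(D^2u-uI)(|Du|^2)_i V_j$ (modulo terms proportional to $u$ that will produce the $u^2V$ integrand and the $uV$ integrand in \eqref{pohozaev}). (4) Integrate over $\Omega$, apply the divergence theorem twice (using $u=0$ on $\partial\Omega$ to kill the first boundary term), and in the second integration by parts use $D_i\sigma_k^{ij}(D^2u-uI)=0$ together with $\sigma_k^{ii}=(n-k+1)\sigma_{k-1}$ and $V_i V_i\ \text{vs}\ V$ relations to extract the factor $(n-k+1)$ and the weight $V$, yielding $\tfrac{n-k+1}2\int_\Omega \sigma_{k-1}(D^2u-uI)|Du|^2 V$. (5) Collect the lower-order terms generated by the extra $-uI$: these are precisely a multiple of $\int_\Omega \sigma_{k-1}(D^2u-uI)u^2 V$ and of $\int_\Omega uV$; tracking their coefficients through steps (1)--(4) should produce the right-hand side of \eqref{pohozaev} with the stated constants.

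I expect the main obstacle to be bookkeeping in steps (2) and (5): making sure every commutator term coming from the curvature of $\mathbb H^n$ (the $-u_l\delta_{ij}+u_j\delta_{il}$ corrections) and every term arising from replacing $u_{ij}$ by $u_{ij}-u\delta_{ij}$ is accounted for, so that the coefficients $-\tfrac{n-k+1}2$, $-kC_n^k$, and $+\tfrac{n-k+1}2$ come out exactly. A secondary check is that the boundary term appears with weight $V_i\gamma_j$ (not $V\gamma_j$), which follows because the surviving boundary contribution is $\tfrac12\int_{\partial\Omega}\sigma_k^{ij}(D^2u-uI)(u_{il}-u\delta_{il})\,u\,V_l\gamma_j|Du|^{?}$-type expression simplified on $\partial\Omega$; since $u=0$ there, one must instead carry the boundary term from the \emph{second} integration by parts, which is $\tfrac12\int_{\partial\Omega}\sigma_k^{ij}(D^2u-uI)|Du|^2 V_i\gamma_j$ — matching the left-hand side of \eqref{pohozaev}. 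No genuinely new analytic input beyond the listed preliminaries should be needed; it is the Euclidean proof transported by the substitution $\tfrac12|x|^2\mapsto V$, with the constant-curvature commutators supplying exactly the $u^2V$ correction term.
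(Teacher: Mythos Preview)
Your proposal is correct and follows essentially the same route as the paper's proof: multiply $\sigma_k(D^2u-uI)=C_n^k$ by $uV$, exploit $V_{lj}=V\delta_{lj}$ to write $\sigma_k^{ij}u_{ij}uV=\sigma_k^{ij}u_{il}uV_{lj}$, integrate by parts using Proposition~\ref{propdivfree}, handle the third-order term via the Ricci identity $u_{ilj}=u_{ijl}-u_l\delta_{ij}+u_j\delta_{il}$ together with $\sigma_k^{ij}(u_{ij}-u\delta_{ij})_l=0$, convert the cross term with \eqref{changeindex} into $\tfrac12\sigma_k^{ij}(|Du|^2)_iV_j$, and apply the divergence theorem a second time so that only the $|Du|^2V_i\gamma_j$ boundary integral survives. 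The only organizational difference is that the paper separates the $-u\delta_{ij}$ part at the outset (their term $II=-(n-k+1)\sigma_{k-1}u^2V$) rather than carrying $(u_{il}-u\delta_{il})$ as a block, which makes the coefficient bookkeeping in your step~(5) slightly cleaner.
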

\begin{proof}
Multiplying the equation\eqref{eq4.9} by $uV$, we obtain 
\begin{equation}\label{c4.4}
\begin{aligned}
    kC_n^kuV=&\sigma_k^{ij}(D^2u-uI)(u_{ij}-u\delta_{ij})uV\\
    =&\sigma_k^{ij}(D^2u-uI)u_{ij}uV-(n-k+1)\sigma_{k-1}(D^2u-uI)u^2V\\
    :=&I+II.
\end{aligned}
\end{equation}
Since $D^2V=VI$, by \eqref{divfree}, we have
\begin{equation}
    \begin{aligned}
    I:=&\sigma_k^{ij}(D^2u-uI)u_{ij}uV
    =\sigma_k^{ij}(D^2u-uI)u_{il}uV_{lj}\\
    =&(\sigma_k^{ij}(D^2u-uI)u_{il}uV_l)_j-\sigma_k^{ij}(D^2u-uI)u_{ilj}uV_l-\sigma_k^{ij}(D^2u-uI)u_{il}u_jV_l
    \end{aligned}
\end{equation}
Using 
$u_{ilj}=u_{ijl}-u_l\delta_{ij}+u_j\delta_{ij}$, it follows that
\begin{equation}\label{c4.6}
\begin{aligned}
    I=&(\sigma_k^{ij}(D^2u-uI)u_{il}uV_l)_j-\sigma_k^{ij}(D^2u-uI)(u_{ij}-u\delta_{ij})_luV_l\\
    &-\sigma_k^{ij}(D^2u-uI)u_juV_i-\sigma_k^{ij}(D^2u-uI)u_{il}u_jV_l.
\end{aligned}
\end{equation}
Differentiating the equation $\sigma_k(D^2u-uI)=C_n^k$, we get
\begin{equation}\label{c4}\sigma_k^{ij}(D^2u-uI)(u_{ij}-u\delta_{ij})_l=0.\end{equation}
By \eqref{divfree}, we compute that
\begin{equation}\label{c4.7}
    \begin{aligned}
    \sigma_k^{ij}(D^2u-uI)u_juV_i=&\frac12\sigma_k^{ij}(D^2u-uI)(u^2)_jV_i\\
    =&\frac12(\sigma_k^{ij}(D^2u-uI)u^2V_i)_j-\frac12\sigma_k^{ij}(D^2u-uI)u^2V_{ij}\\
    =&\frac12(\sigma_k^{ij}(D^2u-uI)u^2V_i)_j-\frac{n-k+1}2\sigma_{k-1}(D^2u-uI)u^2V.
    \end{aligned}
\end{equation}
By \eqref{divfree} and \eqref{changeindex}, we also have that
\begin{equation}\label{c4.8}
\begin{aligned}
\sigma_k^{ij}(D^2u-uI)u_{il}u_jV_l=&\sigma_k^{il}(D^2u-uI)u_{ij}u_jV_l
=\frac12\sigma_k^{ij}(D^2u-uI)(|Du|^2)_iV_j\\
=&\frac12(\sigma_k^{ij}(D^2u-uI)|Du|^2V_j)_i-\frac{n-k+1}2\sigma_{k-1}(D^2u-uI)|Du|^2V
\end{aligned}
\end{equation}
Subtituting  \eqref{c4.6}, \eqref{c4}, \eqref{c4.7} and \eqref{c4.8} into \eqref{c4.4}, we obtain
\begin{equation}
    \begin{aligned}
    kC_n^kuV=&(\sigma_k^{ij}(D^2u-uI)u_{il}uV_l)_j-(n-k+1)\sigma_{k-1}(D^2u-uI)u^2V\\
    &-\frac12(\sigma_k^{ij}(D^2u-uI)u^2V_i)_j+\frac{n-k+1}2\sigma_{k-1}(D^2u-uI)u^2V\\
    &-\frac12(\sigma_k^{ij}(D^2u-uI)|Du|^2V_j)_i+\frac{n-k+1}2\sigma_{k-1}(D^2u-uI)|Du|^2V.
    \end{aligned}
\end{equation}
Applying divergence theorem and noting that $u=0$ on $\partial\Omega$, we deduce 
\begin{equation}
    \begin{aligned}
    kC_n^k\int_{\Omega}uV\mathrm dx
    =&\frac{n-k+1}2\int_\Omega \sigma_{k-1}(D^2u-uI)|Du|^2 V\mathrm dx-\frac{n-k+1}2\int_\Omega\sigma_{k-1}(D^2u-uI)u^2V\mathrm dx\\
    &-\frac12\int_{\partial\Omega}\sigma_k^{ij}(D^2u-uI)|Du|^2V_j\gamma_i\mathrm d\sigma.
    \end{aligned}
\end{equation}
\end{proof}
To handle with the boundary integral term, we need the following lemma. We omit the proof which is the same as lemma \ref{dlemma3.3}.
\begin{lemma}\label{dlemma4.3}
Let $\Omega\subset \mathbb H^n$ be a $C^2$ bounded domain, $u\in C^3(\overline\Omega)$ be a solution to problem \eqref{mainpb}, then 
\begin{equation}
   \sigma_k^{ij}(D^2u-uI)V_j\gamma_i|Du|^2 =\sigma_k^{ij}(D^2u-uI)u_iu_jV_\gamma\quad\text{on }\partial\Omega.
\end{equation}
\end{lemma}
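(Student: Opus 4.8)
The statement to prove is Lemma \ref{dlemma4.3}, which is the hyperbolic analogue of Lemma \ref{dlemma3.3}. The key identity to establish is
\[
\sigma_k^{ij}(D^2u-uI)V_j\gamma_i|Du|^2 = \sigma_k^{ij}(D^2u-uI)u_iu_jV_\gamma \quad\text{on }\partial\Omega.
\]
The paper says the proof is "the same as lemma \ref{dlemma3.3}", so I should mirror that argument with $x$ replaced by the function $V$ (whose gradient $DV$ plays the role of the position vector $x$, since $D^2V=VI$ replaces $D^2(|x|^2/2)=I$).

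First I would exploit the boundary conditions $u=0$ and $u_\gamma=c_0$ on $\partial\Omega$. As in Lemma \ref{dlemma3.3}, since $u$ vanishes on $\partial\Omega$ its gradient is normal, $Du=u_\gamma\gamma=c_0\gamma$; and differentiating the relation $u_\gamma=u_i\gamma_i$ together with $\gamma_iD_j\gamma_i=0$ gives $(u_\gamma)_j=u_{ij}\gamma_i$. Since $u_\gamma\equiv c_0$ is constant on $\partial\Omega$, its tangential derivatives vanish, so $(u_\gamma)_j=u_{\gamma\gamma}\gamma_j$ where $u_{\gamma\gamma}:=u_{ij}\gamma_i\gamma_j$. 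This yields the crucial pointwise relations on $\partial\Omega$:
\[
u_{ij}\gamma_i=u_{\gamma\gamma}\gamma_j,\qquad u_{ij}u_i=u_{\gamma\gamma}u_j,
\]
exactly as \eqref{3.10} and \eqref{3.11}. These are purely metric-independent manipulations, so they transfer verbatim from the Euclidean proof; the only care needed is that $D$, $\gamma$, and covariant derivatives are now with respect to the hyperbolic metric, and that $D_j\gamma_i$ is the second fundamental form, whose contraction with $\gamma_i$ still vanishes because $|\gamma|=1$.

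Then I would run the same induction on $k$ that appears in \eqref{e3.15}, using Proposition \ref{keyprop} (identity \eqref{prop2.1}) to reduce $\sigma_k^{ij}$ to $\sigma_{k-1}$ and $\sigma_{k-1}^{js}$. For $k=1$ the identity is trivial since $\sigma_1^{ij}(D^2u-uI)=\delta_{ij}$ and both sides equal $|Du|^2V_\gamma$. For the inductive step, write
\[
\sigma_k^{ij}(D^2u-uI)V_j\gamma_i|Du|^2
=\bigl(\sigma_{k-1}(D^2u-uI)\delta_{ij}-\sigma_{k-1}^{js}(D^2u-uI)(u_{si}-u\delta_{si})\bigr)V_j\gamma_i|Du|^2,
\]
use $u=0$ on $\partial\Omega$ to drop the $u\delta_{si}$ term, then substitute $u_{si}\gamma_i=u_{\gamma\gamma}\gamma_s$ and $u_{si}u_i=u_{\gamma\gamma}u_s$ to convert $V_j\gamma_i$ into $u_iu_j/|Du|\cdot(\text{stuff})$; matching $V_\gamma=V_i\gamma_i$ and using $|Du|^2=c_0^2$ and $Du=c_0\gamma$ on the boundary lets the $|Du|^2$ factors and the $u_{\gamma\gamma}$ factors be absorbed into the $\sigma_{k-1}$ form of the identity, then fold back using \eqref{prop2.1} to recognize $\sigma_k^{ij}(D^2u-uI)u_iu_jV_\gamma$.

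I do not expect any genuine obstacle here — the only thing that could go wrong is a bookkeeping error in tracking where the factor $V_\gamma$ versus $|Du|^2$ sits, and in checking that the extra $-u\delta_{si}$ pieces coming from $D^2u-uI$ really do vanish on $\partial\Omega$ (they do, since $u=0$ there, which is precisely why the boundary identity has the same shape as in the Euclidean case). The mild subtlety relative to Lemma \ref{dlemma3.3} is that $DV$ is not a literal position vector, but this is irrelevant to the boundary computation: only the algebraic structure of $\sigma_k^{ij}$ and the two boundary relations on $u_{ij}$ are used, so the Euclidean proof goes through with $x\mapsto DV$ and $x\cdot\gamma\mapsto V_\gamma$.
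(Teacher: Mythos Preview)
Your proposal is correct and matches the paper's intended argument precisely: the paper omits the proof, stating only that it is ``the same as lemma \ref{dlemma3.3}'', and you have reproduced exactly that strategy --- deriving the boundary relations $u_{ij}\gamma_i=u_{\gamma\gamma}\gamma_j$ and $u_{ij}u_i=u_{\gamma\gamma}u_j$, then running the induction on $k$ via \eqref{prop2.1}, with the observation that $u=0$ on $\partial\Omega$ kills the extra $-uI$ contribution so that the computation is literally the Euclidean one with $x\mapsto DV$ and $x\cdot\gamma\mapsto V_\gamma$.
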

Furthermore, we turn the boundary integral into an integral on $\Omega$.
\begin{lemma}\label{dlemma4.4}
Let $u\in C^3(\Omega)\cap C^2(\overline\Omega)$ be a solution to problem \eqref{mainpb}, then 
\begin{equation}\label{d4.13}
    \int_{\partial\Omega}\sigma_{k}^{ij}(D^2u-uI)u_iu_jV_\gamma\mathrm d\sigma=(n-k+1)c_0^2\int_\Omega \sigma_{k-1}(D^2u-uI)V\mathrm dx.
\end{equation}
\end{lemma}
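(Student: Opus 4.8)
The plan is to mirror the Euclidean argument of Lemma \ref{dlemma3.4}, replacing the position vector $x$ by the function $V=\cosh r$ (distance from the center of a geodesic ball) and the Euclidean Minkowskian formula \eqref{Minkowskionrn} by its hyperbolic counterpart \eqref{Minkowskionhn}. First I would use the formula \eqref{curvoflvlset} for the curvatures of level sets — which is valid in $\mathbb H^n$ for the matrix $D^2u$, and which by Proposition \ref{keyprop} propagates inductively to the relevant expressions in $\sigma_k^{ij}(D^2u-uI)$ exactly as in the proof of Lemma \ref{dlemma3.3}/\ref{dlemma4.3} — to rewrite the boundary integrand. Since on $\partial\Omega$ one has $u=0$, so that $D^2u-uI=D^2u$ there, and $Du=c_0\gamma$ with $|Du|=c_0$, I would write
\begin{equation}
\sigma_k^{ij}(D^2u-uI)u_iu_j V_\gamma=H_{k-1}|Du|^{k+1}V_\gamma=c_0^{k+1}H_{k-1}V_\gamma \quad\text{on }\partial\Omega,
\end{equation}
where $H_{k-1}$ denotes the $(k-1)$-th curvature of $\partial\Omega$.

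Next I would integrate over $\partial\Omega$ and apply the hyperbolic Minkowskian integral formula \eqref{Minkowskionhn} with the roles of $k$ and $k-1$ shifted down by one, obtaining
\begin{equation}
\int_{\partial\Omega}\sigma_k^{ij}(D^2u-uI)u_iu_j V_\gamma\,\mathrm d\sigma
= c_0^{k+1}\int_{\partial\Omega}H_{k-1}V_\gamma\,\mathrm d\sigma
= c_0^{k+1}\,\frac{n-k+1}{k-1}\int_{\partial\Omega}H_{k-2}V\,\mathrm d\sigma,
\end{equation}
up to the correct binomial normalization that I would track carefully. Then, again on $\partial\Omega$, I would use \eqref{curvoflvlset} once more together with $Du=c_0\gamma$ to write $c_0^{k+1}H_{k-2}=c_0^{k-1}H_{k-2}|Du|^2=c_0^{k-1}\sigma_{k-1}^{ij}(D^2u-uI)u_iu_j=c_0^{k+1}\sigma_{k-1}^{ij}(D^2u-uI)u_i\gamma_j$ (using $u=0$ so $D^2u-uI=D^2u$ on the boundary, and that $u_j=c_0\gamma_j$). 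Reducing the factors of $c_0$ and applying the divergence theorem to the vector field $V\sigma_{k-1}^{ij}(D^2u-uI)u_i$ then produces a bulk integral; here I would use the divergence-free property \eqref{divfree}, the identity $\sum_i\sigma_{k-1}^{ii}=(n-k+2)\sigma_{k-2}$ only as a bookkeeping aid, and the product rule $D_j(V\sigma_{k-1}^{ij}u_i)=V_j\sigma_{k-1}^{ij}u_i+V\sigma_{k-1}^{ij}(u_{ij}-u\delta_{ij})+V u\sigma_{k-1}^{ij}\delta_{ij}$, noting the cancellation $\sigma_{k-1}^{ij}u_iV_j$ against terms coming from expanding $V_j$, to land on $(n-k+1)c_0^2\int_\Omega\sigma_{k-1}(D^2u-uI)V\,\mathrm dx$, which is \eqref{d4.13}.

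The main obstacle, I expect, is the divergence-theorem step in $\mathbb H^n$: unlike the Euclidean case of Lemma \ref{dlemma3.4}, where $(\sigma_{k-1}^{ij}(D^2u)u_i)_j=\sigma_{k-1}^{ij}(D^2u)u_{ij}=(k-1)\sigma_{k-1}(D^2u)$ is immediate, here the correct covariant computation must produce the combination $(u_{ij}-u\delta_{ij})$ under $\sigma_{k-1}^{ij}(D^2u-uI)$ so that $\sigma_{k-1}^{ij}(D^2u-uI)(u_{ij}-u\delta_{ij})=(k-1)\sigma_{k-1}(D^2u-uI)$, and the presence of the weight $V$ with $D^2V=VI$ introduces extra terms $V_j\sigma_{k-1}^{ij}u_i$ and $Vu\,\sigma_1(\sigma_{k-1}^{ij})$ that must be shown to either cancel or assemble correctly. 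Getting these covariant terms and the binomial constants from \eqref{Minkowskionhn} to match so that the final identity is exactly \eqref{d4.13} (with no residual boundary or bulk terms) is the delicate part; the rest is routine bookkeeping parallel to Section 3.
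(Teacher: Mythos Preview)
Your plan is essentially the paper's proof: rewrite the boundary integrand via \eqref{curvoflvlset} (using $u=0$ on $\partial\Omega$ so that $D^2u-uI=D^2u$ there), apply the hyperbolic Minkowskian formula \eqref{Minkowskionhn}, convert back to $\sigma_{k-1}^{ij}(D^2u-uI)u_i\gamma_j$, and use the divergence theorem on the vector field $\sigma_{k-1}^{ij}(D^2u-uI)u_iV$. The cancellation you anticipate is achieved in the paper by integrating the cross term $\sigma_{k-1}^{ij}u_iV_j$ by parts once more and invoking $D^2V=VI$ together with $u|_{\partial\Omega}=0$, so that it becomes $-\int_\Omega \sigma_{k-1}^{ij}uV\delta_{ij}$ and exactly cancels the $Vu\,\sigma_{k-1}^{ij}\delta_{ij}$ term; apart from some minor slips in the powers of $c_0$ (which you already flag as bookkeeping to be tracked), this closes the identity.
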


\begin{proof}
Using  Minkowskian integral formula \eqref{Minkowskionrn} and \eqref{curvoflvlset}, we obtain
\begin{equation}\begin{aligned}\label{d4.15}
    \int_{\partial\Omega}\sigma_k^{ij}(D^2u)u_iu_jV_\gamma\mathrm d\sigma
    =&\int_{\partial\Omega}H_{k-1}|Du|^{k+1}V_\gamma\mathrm d\sigma
    =\frac{n-k+1}{k-1}\int_{\partial\Omega}H_{k-2}|Du|^{k+1}V\mathrm d\sigma\\
    =&\frac{n-k+1}{k-1}\int_{\partial\Omega}\sigma_{k-1}^{ij}(D^2u)u_iu_j|Du|V\mathrm d\sigma.
\end{aligned}\end{equation}
Since $u=0$ and $|Du|=u_\gamma=c_0$ on $\partial\Omega$, we have
\begin{equation}\label{d4.17}
\begin{aligned}
\int_{\partial\Omega}\sigma_{k-1}^{ij}(D^2u)u_iu_j|Du|V\mathrm d\sigma
=&c_0^2\int_{\partial\Omega}\sigma_{k-1}^{ij}(D^2u-uI)u_iV\gamma_j\mathrm dx\\
=&c_0^2\int_{\Omega}(\sigma_{k-1}^{ij}(D^2u-uI)u_iV)_j\mathrm dx
\end{aligned}
\end{equation}

Applying proposition \ref{propdivfree}, we get
\begin{equation}\label{d4.18}
    (\sigma_{k-1}^{ij}(D^2u-uI)u_iV)_j=\sigma_{k-1}^{ij}(D^2u-uI)u_{ij}V+\sigma_{k-1}^{ij}(D^2u-uI)u_iV_j.
\end{equation}
By computation, 
\begin{equation}\label{d4.19}
    \sigma_{k-1}^{ij}(D^2u-uI)u_iV_j=(\sigma_{k-1}^{ij}(D^2u-uI)uV_j)_i-\sigma_{k-1}^{ij}(D^2u-uI)uV\delta_{ij}.
\end{equation}
Substituting \eqref{d4.18} and \eqref{d4.19} into \eqref{d4.17}, and noting that $u=0$ on $\partial\Omega$, it follows that
\begin{equation}\label{d4.20}\begin{aligned}
\int_{\partial\Omega}\sigma_{k-1}^{ij}(D^2u)u_iu_j|Du|V\mathrm d\sigma
=&c_0^2\int_{\Omega}\sigma_{k-1}^{ij}((D^2u-uI)(u_{ij}-u\delta_{ij})V dx\\
=&(k-1)c_0^2\int_{\Omega}\sigma_{k-1}(D^2u-uI)V\mathrm dx.
\end{aligned}
\end{equation}
Putting \eqref{d4.15} and \eqref{d4.20} together, we obtain \eqref{d4.13}.
\end{proof}

\begin{proof}[Proof of theorem \ref{mainthm}]
Combining lemma \ref{dlempohoonhn}, lemma \ref{dlemma4.3} with lemma \ref{dlemma4.4}, we obtian
\begin{equation}\label{test1}\begin{aligned}kC_n^k\int_\Omega uV \mathrm dx =&\frac{n-k+1}2\int_\Omega \sigma_{k-1}(D^2u-uI)V(|Du|^2-u^2-c_0^2) \mathrm dx .\end{aligned}\end{equation}
By lemma \ref{PP}, maximum principle can be applied to $\widetilde P$, thus
\begin{equation}
    \widetilde P=|Du|^2-u^2-2u\leq c_0^2\quad\text{in }\Omega,
\end{equation}
Putting it into \eqref{test1}, we deduce
\begin{equation}\label{g4.30}
    kC_n^k\int_\Omega u\mathrm dx\leq (n-k+1)\int_\Omega \sigma_{k-1}(D^2u-uI)u\mathrm dx.
\end{equation}
On the other hand, using MacLaurin inequalities \eqref{MacLaurin}, 
 we have 
 \begin{equation}
 \sigma_{k-1}(D^2u-uI)\geq C_n^{k-1}(\frac{\sigma_k(D^2u-uI)}{C_n^k})^\frac{k-1}k=C_n^{k-1}.
 \end{equation}
 Since $u<0$ in $\Omega$, we get 
\begin{equation}\label{g4.31}
    (n-k+1)\sigma_{k-1}(D^2u-uI)u\leq kC_n^ku\quad \text{in }\Omega.
\end{equation}
It follows from \eqref{g4.30} and \eqref{g4.31} that
\begin{equation}
    \sigma_{k-1}(D^2u-uI)=C_n^{k-1}\quad\text{in }\Omega.
\end{equation}
By MacLaurin inequalities \eqref{MacLaurin}, eigenvalues of $D^2u-uI$ are all equal to 1. Follows from an Obata type result (\cite{Reilly1980}, See also \cite{Catino2012,CheegerColding1996,Ciraolo2019}), $\Omega$ must be a ball $B_R$ and $u$ depends only on the distance from the center of $B_R$, where $R=\tanh^{-1}c_0$. It is easy to see that $u$ is of the form 
\begin{equation}u=1-\frac{\cosh r}{\cosh R}.\end{equation}
Hence we complete the proof of theorem \ref{mainthm}.
\end{proof}

\section*{Acknowledgments}
The authors would like to thank Prof. Xi-Nan Ma for his discussions and advice. The research is supported by the National Science Foundation of China No. 11721101 and the National Key R and D Program of China 2020YFA0713100. 


\end{document}